\theoremstyle{plain}
\newtheorem{theorem}{Theorem}[section]
\newtheorem{proposition}[theorem]{Proposition}
\newtheorem{lemma}[theorem]{Lemma}
\newtheorem*{thm}{Theorem~\ref{newth}}
\newtheorem*{thmm}{Theorem~\ref{goren2}}
\theoremstyle{definition}
\newtheorem{question}[theorem]{Question}
\newtheorem{definition}[theorem]{Definition}
\newtheorem{remark}[theorem]{Remark}
\numberwithin{equation}{section}
\newcommand{\hgt}{\operatorname{ht}}
\newcommand{\m}{\mathfrak m}
\begin{document}
\title{A less restrictive Brian\c{c}on-Skoda theorem with coefficients}
\author{Ian M. Aberbach and Aline Hosry}
\address{Mathematics Department \\
 	University of Missouri\\
	Columbia, MO 65211 USA}

\date{\today}
\maketitle
\begin{abstract}
The Brian\c{c}on-Skoda theorem in its many versions has been studied by algebraists for several decades. In this paper, 
under some assumptions on an F-rational local ring $(R,\m)$, and an ideal $I$ of $R$ of analytic spread $\ell$ and height $g < \ell$, we improve on two theorems by Aberbach and Huneke.  Let $J$ be a reduction of $I$. We first give results on when the integral closure of $I^\ell$ is contained in the product $J I_{\ell-1}$, where $I_{\ell-1}$ is the intersection of the primary components of $I$ of height $\leq \ell-1$.  In the case that $R$ is also Gorenstein, we give results on when the integral closure of $I^{\ell-1}$ is contained in $J$. \\
\end{abstract}

\section{Introduction}

In this paper, all rings are assumed to be  commutative and  Noetherian with identity.

The theorem of Brian\c{c}on and Skoda was first proved in an analytic setting. Namely, let $ O_n= \mathbb{C}\{z_1,\dots,z_n\}$ be the ring of convergent power series in $n$ variables. Let $f \in O_n$ be a non-unit (i.e. $f$ vanishes at the origin), and let $J(f)= (\frac{\partial f}{\partial z_1},\dots,\frac{\partial f}{\partial z_n})O_n$ be the Jacobian ideal of $f$. Then one can see that $f \in \overline{J(f)}$, the integral closure of $J(f)$,  and in particular there is an integer $k$ such that $f^k \in J(f)$. John Mather raised the following question: Does there exist an integer $k$ that works for all non-units $f$?

Brian\c{c}on and Skoda first answered this question affirmatively by proving that the $n^{th}$ power of $f$ lies in $J(f)$. This is an immediate result of the following theorem:

\begin{theorem} \textup{(\cite{BS})} Let $I \subseteq O_n$ be an ideal generated by $\ell$ elements. Then for all $w \geq 0$, 
$$ \overline{I^{\ell+w}} \subseteq I^{w+1}.$$
\end{theorem}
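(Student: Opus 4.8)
I would prove this by reduction to positive characteristic, where the statement becomes a short Frobenius computation combined with a standard property of regular rings. For the reduction step: the assertion for a fixed $\ell$-generated ideal $I\subseteq O_n$ follows from the corresponding assertion in the completion $\widehat{O_n}=\mathbb{C}[[z_1,\dots,z_n]]$ (by faithful flatness of $O_n\to\widehat{O_n}$, since an integral equation over $\overline{I^{\ell+w}}$ extends to $\widehat{O_n}$ and $I^{w+1}\widehat{O_n}\cap O_n=I^{w+1}$), and the assertion for the complete regular local ring $\mathbb{C}[[z_1,\dots,z_n]]$ --- which has equal characteristic zero --- follows from the analogous statement in characteristic $p$ by the standard reduction-to-characteristic-$p$ machinery for equal-characteristic-zero local rings. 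So it suffices to prove: if $R$ is a regular local ring of characteristic $p>0$ and $I$ is generated by $\ell$ elements, then $\overline{I^{\ell+w}}\subseteq I^{w+1}$ for all $w\ge 0$.

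Assume now $R$ is regular local of characteristic $p>0$, write $I=(x_1,\dots,x_\ell)$, and fix $u\in\overline{I^{\ell+w}}$. Since $R$ is a domain there is a nonzero $c$ with $cu^m\in I^{(\ell+w)m}$ for all $m\ge 1$; take $m=q:=p^e$. The combinatorial core is the inclusion $I^{(\ell+w)q}\subseteq (I^{w+1})^{[q]}$: for a monomial $x^\alpha$ with $|\alpha|=(\ell+w)q$, the estimate $\lfloor\alpha_i/q\rfloor>\alpha_i/q-1$ gives $\sum_i\lfloor\alpha_i/q\rfloor>(\ell+w)-\ell=w$, hence $\sum_i\lfloor\alpha_i/q\rfloor\ge w+1$, so one can pick $\gamma\in\mathbb{N}^\ell$ with $\gamma_i\le\lfloor\alpha_i/q\rfloor$ and $|\gamma|=w+1$; then $q\gamma_i\le\alpha_i$ for all $i$ and $x^\alpha=(x^\gamma)^q\,x^{\alpha-q\gamma}\in (I^{w+1})^{[q]}$ because $x^\gamma\in I^{w+1}$. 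Hence $cu^q\in (I^{w+1})^{[q]}$ for every $q=p^e$, which is precisely the statement that $u$ belongs to the tight closure $(I^{w+1})^*$. But a regular ring is weakly $F$-regular --- by Kunz's theorem the Frobenius endomorphism of $R$ is flat, and this forces every ideal of $R$ to be tightly closed --- so $u\in I^{w+1}$, completing the argument.

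The main obstacle is the reduction step, not the characteristic-$p$ computation: the latter is short and requires no hypothesis on the height of $I$ or on the residue field, whereas justifying reduction to characteristic $p$ for an analytic (equivalently, after completion, a complete) local ring --- rather than one essentially of finite type over a field --- requires the equal-characteristic-zero reduction machinery and careful tracking of how integral closures behave under the base changes involved. For completeness, the two classical alternatives: the original proof of Brian\c{c}on and Skoda is analytic, extracting the size estimate $|u|\le C\bigl(\sum_j|f_j|^2\bigr)^{(\ell+w)/2}$ that characterizes membership in $\overline{I^{\ell+w}}$ and feeding it into Skoda's $L^2$ division theorem (where the number of generators enters only through an exponent threshold that, in dimension $n$, never exceeds $\ell$); and Lipman--Teissier and Lipman--Sathaye gave purely algebraic proofs via Jacobian ideals, valid for the wider class of pseudo-rational rings. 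Finally, carrying out the characteristic-$p$ argument on a minimal reduction of $I$ in place of $I$ yields the sharper $\overline{I^{\min(\dim R,\ell)+w}}\subseteq I^{w+1}$, and in fact $\min(\dim R,\ell)$ can be replaced by the analytic spread $\ell(I)$; it is improvements of this flavour --- sharpening the exponent, replacing containment in $I^{w+1}$ by containment in a power of a reduction, and weakening ``regular'' to ``F-rational'' --- that the present paper develops.
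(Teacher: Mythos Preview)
The paper does not prove this statement at all: Theorem~1.1 is simply quoted from \cite{BS} as historical background, with no argument given. There is therefore no ``paper's own proof'' to compare your attempt against.

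That said, your sketch is essentially the Hochster--Huneke tight-closure proof (cited in the paper as \cite{HH}), and the characteristic-$p$ core is correct: the pigeonhole inclusion $I^{(\ell+w)q}\subseteq (I^{w+1})^{[q]}$ is exactly right, and combining it with weak $F$-regularity of regular rings gives the result in positive characteristic immediately. You are also right that the genuine work lies in the reduction from $O_n$ (or its completion) to characteristic $p$; this machinery exists and is what \cite{HH} develops, but it is not short, and tracking integral closures through Artin approximation and descent requires care. A cleaner route for the specific statement over $O_n$ is to invoke Lipman--Sathaye \cite{LS} directly, since their argument works for arbitrary regular local rings without any reduction step; you mention this as an alternative, and for the purpose of establishing Theorem~1.1 it is the more economical citation.
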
 

Since $f \in \overline{J(f)}$, $f^n \in \overline{J(f)}^n \subseteq \overline{J(f)^n} \subseteq J(f)$, by applying the Brian\c{c}on and Skoda theorem for $I= J(f)$, which has at most $n$ generators (taking $w$ to be zero). Hence $f^n \in J(f)$ and Mather's question is answered.

Lipman and Sathaye proved that this purely algebraic result can be extended to arbitrary regular local rings as follows:
\begin{theorem} \textup{(\cite{LS})} Let $(R,\m)$ be a regular local ring and suppose that $I$ is an ideal of $R$ generated by $\ell$ elements. Then for all $w \geq 0$, 
$$ \overline{I^{\ell+w}} \subseteq I^{w+1}.$$
\end{theorem}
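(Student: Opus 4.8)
The plan is to reduce to a convenient situation and then argue separately according to the characteristic of $R$, the positive‑characteristic case being the clean one. Since the completion $\widehat R$ is again regular (hence excellent) and faithfully flat over $R$, with $\overline{I^n}\,\widehat R=\overline{I^n\widehat R}$ and $\overline{I^n\widehat R}\cap R=\overline{I^n}$, I may assume $R$ is complete; adjoining an indeterminate to the residue field, I may assume it is infinite. Passing to a minimal reduction $J$ of $I$, generated by $\ell'\le\dim R$ elements (with $\ell'\le\ell$), and using $\overline{I^{\ell+w}}=\overline{J^{\ell+w}}$ together with the theorem applied to $J$, one gets $\overline{J^{\ell+w}}\subseteq J^{\ell+w-\ell'+1}\subseteq I^{w+1}$; so I may also assume $\ell\le\dim R$.

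Now suppose $R$ has characteristic $p>0$. Because $R$ is excellent, the integral closure of the Rees algebra of $L:=I^{\ell+w}$ inside $R[t]$ is module‑finite over it, so there is an integer $k$ with $\overline{L^{m+k}}\subseteq L^m$ for all $m\ge 1$. Let $0\ne u\in\overline L=\overline{I^{\ell+w}}$. Reading integral closure off the finitely many Rees valuations of $I$ gives $u^j\in\overline{L^j}$ for all $j$, whence $c\,u^m=u^{m+k}\in\overline{L^{m+k}}\subseteq L^m=I^{m(\ell+w)}$ for all $m\ge 1$, where $c:=u^k\ne 0$. Put $m=q=p^e$. The elementary core is a pigeonhole count: if $I=(a_1,\dots,a_\ell)$ and $a_1^{e_1}\cdots a_\ell^{e_\ell}$ is a monomial with $\sum e_i=q(\ell+w)$, then $\sum_i\lfloor e_i/q\rfloor\ge\bigl(q(\ell+w)-\ell(q-1)\bigr)/q=w+\ell/q>w$, so $\sum_i\lfloor e_i/q\rfloor\ge w+1$; hence that monomial is divisible by $\bigl(a_1^{\lfloor e_1/q\rfloor}\cdots a_\ell^{\lfloor e_\ell/q\rfloor}\bigr)^q\in(I^{w+1})^{[q]}$, and therefore $I^{q(\ell+w)}\subseteq(I^{w+1})^{[q]}$. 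Thus $c\,u^q\in(I^{w+1})^{[q]}$ for every $q$, i.e. $u$ lies in the tight closure of $I^{w+1}$. Since $R$ is regular, Frobenius is flat (Kunz), so tight closure is trivial: were $u\notin I^{w+1}$, then $c\in(I^{w+1})^{[q]}:_R u^q=(I^{w+1}:_R u)^{[q]}\subseteq\m^{[q]}$ for all $q$, forcing $c\in\bigcap_q\m^{[q]}=(0)$, a contradiction. Hence $u\in I^{w+1}$.

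If $R$ contains a field of characteristic $0$, I would use the standard descent to positive characteristic: spread $(R,I,u,\text{an integral equation for }u)$ out over a finitely generated $\mathbb Z$‑subalgebra so that for almost all primes $p$ the reductions are regular and all relations persist, apply the characteristic‑$p$ case fiberwise, and push $u\in I^{w+1}$ back to $R$ by generic freeness and faithfully flat descent. Mixed characteristic escapes this framework classically; there one runs the original, characteristic‑free argument of Lipman and Sathaye, whose engine is their Jacobian theorem: for $R$ regular and $S\supseteq R$ a module‑finite, generically separable extension domain presented as $R[X_1,\dots,X_n]/\mathfrak a$, one has $\mathcal J_{S/R}\,\overline S\subseteq S$. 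One feeds this a blowup‑type algebra built from the chosen generators of $I$, checks by an explicit Jacobian computation that $\mathcal J_{S/R}\supseteq I^{\ell-1}$, uses $u\in\overline{I^{\ell+w}}$ to place $u\,\overline S$ inside $I^{\ell+w}\overline S$, and contracts the resulting relation back into $R$, where a Cramer's‑rule/determinant argument removes the factor $I^{\ell-1}$ and leaves $u\in I^{w+1}$.

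The main obstacle, for a uniform proof, is precisely this Jacobian theorem and the mixed‑characteristic bookkeeping around it: controlling differents and conductors, the behaviour of Jacobian ideals under localization and under passage to the integral closure, and arranging the auxiliary algebra $S$ and its presentation so that $\mathcal J_{S/R}$ catches $I^{\ell-1}$ exactly. By contrast, in the equal‑characteristic treatment the only non‑formal inputs are Kunz's theorem (regular $\Rightarrow$ Frobenius flat), the module‑finiteness of integral closures of Rees algebras over excellent rings, and the characteristic‑$0$ descent machinery — all standard — which is why that case is comparatively painless and the mixed‑characteristic case is the genuine sticking point for anyone unwilling to develop the Jacobian apparatus.
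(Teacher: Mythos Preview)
The paper does not prove this theorem; it is quoted as a background result with a citation to Lipman--Sathaye, so there is no argument in the paper to compare your proposal against.

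On its own merits: your positive-characteristic argument is the standard Hochster--Huneke tight-closure proof and is correct. The pigeonhole count $\sum_i\lfloor e_i/q\rfloor\ge w+1$ yields $I^{q(\ell+w)}\subseteq (I^{w+1})^{[q]}$, the uniform multiplier $c=u^k$ comes from module-finiteness of the normalized Rees algebra, and the colon identity $(I^{w+1})^{[q]}:_R u^q=(I^{w+1}:_R u)^{[q]}$ is exactly where flatness of Frobenius (Kunz) enters. The equicharacteristic-zero reduction you sketch is the standard one. For mixed characteristic you correctly concede that the original Lipman--Sathaye Jacobian theorem is unavoidable; what you write there is an outline, not a proof, and you flag it as such. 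One small remark: the preliminary reduction to $\ell\le\dim R$ via a minimal reduction is logically sound but plays no role in the rest of your argument, since the pigeonhole step works for an arbitrary generating set of arbitrary length.
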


Lipman and Teissier were partially able to extend this theorem to pseudo-rational rings \cite{LT}, while Aberbach and Huneke were able to prove the theorem for F-rational rings and rings of F-rational type in the equicharacteristic case \cite{AH1}. 

Initially motivated by trying to understand the relationship between the Cohen-Macaulayness of the Rees ring of $I$, $R[It]$, and the associated graded ring of $I$, $\text{gr}_I R$, various authors (see, e.g., \cite{AH1},\cite{AH2},\cite{AH3},\cite{AHT},\cite{HH},\cite{L},\cite{Sw}) have studied the coefficients involved in the Brian\c con-Skoda Theorem.  More specifically, if $J = (a_1,\ldots, a_\ell)$ is a minimal reduction of $I$ and $z \in \overline{I^\ell}= \overline{J^\ell}$, then when we write $z = \sum_{i=1}^\ell r_ia_i$, we may ask where the coefficients $r_i$ lie.  For instance, can we say that they lie in $I$?  Heuristically, when $\hgt(I) < \ell$, there is reason to have this occur.
One  such result is Theorem 3.6 of  \cite{AH2}.   We are able to substantially reduce the necessary hypotheses needed in that paper.  Explicitly, we prove the following result (see the next section for the definition of $I_{\ell-1}$):

\begin{thm} 
Let $(R,\m)$ be an F-rational Cohen-Macaulay local ring and $I \subseteq R$ be an ideal of analytic spread $\ell$ and of height $g < \ell$. If $J$ is any reduction of $I$ then $\overline{I^\ell} \subseteq J I_{\ell-1}.$
\end{thm}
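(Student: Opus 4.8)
The strategy is to reduce to the case where $R$ is complete (or even a complete local domain) by faithfully flat base change, since integral closure, analytic spread, height of the relevant primary components, and F-rationality are all compatible with completion. Once there, I would localize: the containment $\overline{I^\ell} \subseteq JI_{\ell-1}$ can be checked after localizing at each associated prime of $R/JI_{\ell-1}$, so it suffices to handle primes $\mathfrak p$ with $\operatorname{ht}(\mathfrak p) \le \ell$ — at primes of larger height, $I_{\ell-1}$ localizes to the unit ideal and there is nothing to prove. For such a $\mathfrak p$, after localizing, $I_\mathfrak p$ has height equal to its analytic spread only if $\operatorname{ht}(\mathfrak p) = \ell = \ell(I_\mathfrak p)$, and one expects to invoke the Briançon–Skoda theorem for F-rational rings of Aberbach–Huneke (\cite{AH1}), which gives $\overline{I^\ell_\mathfrak p} \subseteq J_\mathfrak p$ when $\ell$ generators suffice for a reduction.

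**Key steps.** First I would set up the reduction and recall that $I_{\ell-1} = \bigcap_{\operatorname{ht}(\mathfrak q) \le \ell - 1} I_\mathfrak q$, the intersection of the $\mathfrak q$-primary components of (an irredundant primary decomposition of) $I$ over primes of height at most $\ell-1$; in particular $(I_{\ell-1})_\mathfrak p = R_\mathfrak p$ whenever $\operatorname{ht}(\mathfrak p) \ge \ell$. Second, using that $R$ is Cohen–Macaulay (hence catenary and equidimensional locally), I would argue that $JI_{\ell-1}$ is unmixed of height... well, more carefully: its associated primes all have height $\le \ell$. Third, for a fixed associated prime $\mathfrak p$ of $R/JI_{\ell-1}$ with $\operatorname{ht}(\mathfrak p) \le \ell$, localize and reduce to showing $\overline{I^\ell_\mathfrak p} \subseteq J_\mathfrak p (I_{\ell-1})_\mathfrak p$. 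If $\operatorname{ht} \mathfrak p < \ell$ then $\ell(I_\mathfrak p) \le \operatorname{ht}\mathfrak p < \ell$, so by the Aberbach–Huneke theorem for F-rational rings, $\overline{I^{\ell}_\mathfrak p} = \overline{I_\mathfrak p^{\ell(I_\mathfrak p) + (\ell - \ell(I_\mathfrak p))}} \subseteq I_\mathfrak p^{\ell - \ell(I_\mathfrak p) + 1} \subseteq I_\mathfrak p \subseteq (I_{\ell-1})_\mathfrak p$, and we can factor out a generator of a reduction to land in $J_\mathfrak p (I_{\ell-1})_\mathfrak p$; the subtle case is $\operatorname{ht}\mathfrak p = \ell$, where $(I_{\ell-1})_\mathfrak p = R_\mathfrak p$ and the claim becomes $\overline{I^\ell_\mathfrak p} \subseteq J_\mathfrak p$, which is exactly the F-rational Briançon–Skoda theorem of \cite{AH1} applied to $I_\mathfrak p$, whose analytic spread is at most $\ell$.

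**Main obstacle.** The delicate point is the bookkeeping that links the global reduction $J$ of $I$ to a reduction of $I_\mathfrak p$ with the right number of generators, and establishing that after localizing at an associated prime of $JI_{\ell-1}$ one genuinely lands in the "equal height and analytic spread" situation where \cite{AH1} applies cleanly. Showing $\overline{I^\ell_\mathfrak p}\subseteq J_\mathfrak p(I_{\ell-1})_\mathfrak p$ at all relevant primes does not immediately give the global containment unless $JI_{\ell-1}$ has no embedded primes; so the technical heart is verifying, using Cohen–Macaulayness of $R$ (so that $J$, and products of $J$ with the unmixed-in-low-codimension ideal $I_{\ell-1}$, are suitably unmixed), that $JI_{\ell-1}$ is the intersection of its localizations at its height-$\le \ell$ associated primes — i.e., that the local statements patch. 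I expect this patching argument, together with the careful handling of the associated primes of $JI_{\ell-1}$ (possibly passing through $I_{\ell-1}$ being unmixed of the appropriate heights and $J$ being a complete intersection), to be where the real work lies, and where the hypothesis that $R$ is Cohen–Macaulay (rather than merely F-rational) is used.
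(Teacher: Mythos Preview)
Your proposal has a genuine gap, and it is exactly the one you flag as the ``main obstacle'': you need every associated prime of $JI_{\ell-1}$ to have height at most $\ell$, but you give no argument for this, and there is no reason to expect it. It is true, by a depth count in a Cohen--Macaulay ring, that every associated prime of the $\ell$-generated ideal $J$ has height at most $\ell$; but the product $JI_{\ell-1}$ can acquire embedded primes that are controlled neither by $\operatorname{Ass}(R/J)$ nor by $\operatorname{Ass}(R/I_{\ell-1})$. Your hope that ``$J$ being a complete intersection'' helps is misplaced: $J$ has height $g<\ell$, so its $\ell$ generators are \emph{not} a regular sequence. Without this bound on associated primes, the local containments $(\overline{I^\ell})_{\mathfrak p}\subseteq (JI_{\ell-1})_{\mathfrak p}$ simply do not patch to the global statement. (A secondary looseness: at primes of height $<\ell$ your phrase ``factor out a generator of a reduction'' needs more care, though it can be made to work by choosing a minimal reduction $J'\subseteq J_{\mathfrak p}$ and using $\overline{I_{\mathfrak p}^\ell}\subseteq (J')^{\ell-\ell(I_{\mathfrak p})+1}\subseteq J_{\mathfrak p}\cdot I_{\mathfrak p}\subseteq J_{\mathfrak p}(I_{\ell-1})_{\mathfrak p}$.)

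The paper's proof avoids localization entirely and proceeds by a perturbation/Krull-intersection argument. One fixes $N\ge 0$, chooses generators $a_1,\dots,a_\ell$ of $J$ together with auxiliary elements $t_{g+1},\dots,t_\ell\in\m^N$ (via the basic-generating-set machinery, arranged so that a colon relation $(I_{\ell-1},t_{g+1}^2,\dots,t_{\ell-1}^2):t_\ell\subseteq(I_{\ell-1},t_{g+1},\dots,t_{\ell-1})$ holds), and sets $b_i=a_i+t_i^2$. Then $\overline{I^\ell}\subseteq(b_1,\dots,b_\ell)$ by the F-rational Brian\c con--Skoda theorem, so any $z\in\overline{I^\ell}$ can be written $z=\sum r_ib_i$. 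A direct colon-ideal computation, using that $t_\ell\,\overline{I^\ell}\subseteq B_{\ell-1}^2$ and that $b_1,\dots,b_\ell$ is a \emph{regular sequence} (this is where Cohen--Macaulayness is actually used), forces each $r_i\in I_{\ell-1}+\m^N$. Hence $z\in JI_{\ell-1}+\m^N$ for every $N$, and Krull intersection finishes. The point is that the coefficient ideal is controlled globally by approximation, never by patching local data.
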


We are also interested in reducing the power $\ell$ of $I$ in $\overline{I^\ell} \subseteq J$. This is possible in Gorenstein rings, with the aid of local duality, using ideas first utilized in \cite{H}.  More precisely, we improve on  Theorem 4.1 of \cite{AH1}, due to Aberbach and Huneke,  to show: 

\begin{thmm}  Let $(R,\m)$ be an F-rational Gorenstein local ring of dimension $d$ and characteristic $p > 0$. Suppose that $I$ is an ideal of height $g$ and analytic spread $\ell > g$. Assume that $I= I_{\ell-1}$ and that $R/I$ has depth at least $d-\ell+1$. Then for any reduction $J$ of $I$, we have $\overline{I^{\ell-1}} \subseteq J.$\\
\end{thmm}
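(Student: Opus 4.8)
The plan is to combine the coefficient bound furnished by Theorem~\ref{newth} with a local duality argument on the Gorenstein ring $R$, in the spirit of \cite{H}, using the F-regularity of $R$ only at the last step. We first reduce to a convenient situation. Passing to the completion of $R$ and then adjoining indeterminates to the residue field preserves F-rationality, the Gorenstein property, the invariants $g=\hgt I$ and $\ell$, the hypotheses $I=I_{\ell-1}$ and $\operatorname{depth}(R/I)\ge d-\ell+1$, and leaves the validity of the conclusion unchanged; so we may assume $(R,\m)$ is complete with infinite residue field. Since every reduction of $I$ contains a minimal reduction, it suffices to treat the case that $J=(a_1,\dots,a_\ell)$ is a minimal reduction of $I$.

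Apply Theorem~\ref{newth}: as $R$ is F-rational and Cohen--Macaulay, it gives $\overline{I^{\ell}}\subseteq JI_{\ell-1}=JI$, the last equality because $I=I_{\ell-1}$. Since $I\cdot\overline{I^{\ell-1}}\subseteq\overline{I^{\ell}}$, we get
\[
\overline{I^{\ell-1}}\ \subseteq\ (JI:_R I)\ \subseteq\ (J:_R I).
\]
Now $(J:_R I)$ is in general strictly larger than $J$, since every minimal prime of $I$ lies in $\operatorname{Ass}(R/J)$ and contains $I$; thus the information that $z\in\overline{I^{\ell-1}}$ must still be used, and this is exactly where the Gorenstein hypothesis and the bound on $\operatorname{depth}(R/I)$ enter. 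Note also that an F-rational Gorenstein local ring is F-regular, so $J=J^{*}$; hence it is enough to prove $\overline{I^{\ell-1}}\subseteq J^{*}$.

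The heart of the argument is this last containment, and here we use characteristic $p$. Fix $z\in\overline{I^{\ell-1}}=\overline{J^{\ell-1}}$ and a test element $c_0$; then $c_0\,z^{q}\in J^{(\ell-1)q}$ for every $q=p^{e}$, whereas what is wanted is a single multiplier $c$ with $c\,z^{q}\in J^{[q]}$ for all $q$. The discrepancy between the ordinary power $J^{(\ell-1)q}$ and the Frobenius power $J^{[q]}$ grows with $q$ and is not closed by Theorem~\ref{newth} alone; it is closed by local duality on $R$. The condition $\operatorname{depth}(R/I)\ge d-\ell+1$ is equivalent to $H^{i}_{\m}(R/I)=0$ for $i<d-\ell+1$, hence, by local duality in the Gorenstein ring $R$, to $\operatorname{Ext}^{j}_{R}(R/I,R)=0$ for $j>\ell-1$; combined with $I=I_{\ell-1}$, which forces $R/I$ to have no associated prime of height $\ge\ell$, this vanishing is precisely the input needed to run the duality computation of \cite{H} on the Koszul and local-cohomology complexes attached to $a_1^{q},\dots,a_\ell^{q}$ and thereby upgrade $c_0\,z^{q}\in J^{(\ell-1)q}$ to $c\,z^{q}\in J^{[q]}$, with $c$ independent of $q$. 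This yields $z\in J^{*}=J$.

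The step I expect to be the main obstacle is this duality estimate: showing that the single hypothesis $\operatorname{depth}(R/I)\ge d-\ell+1$ — rather than Cohen--Macaulayness of $R/I$ or the stronger conditions of \cite{AH1} — supplies exactly the vanishing of the relevant $\operatorname{Ext}$ (equivalently, local cohomology) modules that the computation requires, and dovetailing this with the hypothesis $I=I_{\ell-1}$ and with the tight-closure bookkeeping so that F-rationality may legitimately be used to discard the tight closure at the end. The base-change reductions and the appeal to Theorem~\ref{newth} are routine by comparison.
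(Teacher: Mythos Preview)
Your proposal identifies the right high-level ingredients --- local duality in a Gorenstein ring, the depth hypothesis translated into an $\operatorname{Ext}$-vanishing, and tight closure to close up at the end --- but the decisive step is left as a promise rather than carried out, and the route you sketch for it does not obviously work. The difficulty is that $a_1,\dots,a_\ell$ are \emph{not} parameters (their height is $g<\ell$), so the Koszul/local-cohomology machinery attached to $a_1^{q},\dots,a_\ell^{q}$ does not enjoy the duality behaviour you need: there is no evident mechanism by which the vanishing $\operatorname{Ext}^{j}_R(R/I,R)=0$ for $j>\ell-1$ upgrades $c_0 z^{q}\in J^{(\ell-1)q}$ to $c z^{q}\in J^{[q]}$. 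Working directly inside $J$ and $J^{[q]}$ you never reach a situation where Gorenstein self-duality (which is fundamentally a statement about full systems of parameters) applies.

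The paper's proof supplies exactly this missing device. It does \emph{not} invoke Theorem~\ref{newth}; instead it perturbs the $a_i$ to genuine parameters $b_i=a_i+t_i^{2}$ via Proposition~\ref{mainprop1}, extends $b_1,\dots,b_\ell$ to a full system of parameters $\mathfrak{A}=(b_1,\dots,b_\ell,x_{\ell+1},\dots,x_d)$, and then uses Gorenstein local duality in the clean zero-dimensional form $\mathfrak{A}:(\mathfrak{A}:t_\ell^{2})=(\mathfrak{A},t_\ell^{2})$. The depth bound $\operatorname{depth}(R/I)\ge d-\ell+1$ enters not as an $\operatorname{Ext}$-vanishing but concretely, to choose $x_{\ell+1},\dots,x_d$ so that $t_\ell,x_{\ell+1},\dots,x_d$ is regular on $R/I$; the hypothesis $I=I_{\ell-1}$ ensures $t_\ell$ is a nonzerodivisor on $R/I$, which feeds into the colon containment of Proposition~\ref{new}. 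One then shows $\overline{I^{\ell-1}}\subseteq \mathfrak{A}:Q\subseteq \mathfrak{A}:(\mathfrak{A}:t_\ell^{2})=(\mathfrak{A},t_\ell^{2})\subseteq J+\m^{N}$ for arbitrary $N$, and concludes by Krull intersection. The perturbation-to-parameters idea is the missing key in your outline.
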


\section{Preliminary results}

In this section, we review some of the definitions and results that will be used in this paper.

Let $(R,\m)$ be a Noetherian local ring and let $I$ be an ideal of $R$. An ideal $J \subseteq I$ is a \textit{reduction} of $I$ if there exists an integer $n$ such that $J I^n = I^{n+1}$ \cite{NR}. The least such integer is the \textit{reduction number} of $I$ with respect to $J$. A reduction $J$ of $I$ is called a \textit{minimal reduction} if $J$ is minimal with respect to inclusion among reductions. When $(R,\m)$ is local with infinite residue field, every minimal reduction $J$ of $I$ has the same number of minimal generators. This number is called the \textit{analytic spread} of $I$, denoted by $\ell(I)$, and we always have the inequalities ht($I$) $\leq \ell(I) \leq $ dim $R$. 
The \textit{analytic deviation} of $I$, denoted by $ad(I)$, is the difference between the analytic spread of $I$ and the height of $I$, i.e. $ad(I)= \ell(I)-$ht($I$). 
We also define $I^{un}$ to be the intersection of the minimal primary components of the ideal $I$ (under this definition $I^{un}$ has no embedded components but may have components of different heights or dimension).

An element $x$ of $R$ is said to be in the \textit{integral closure} of $I$, denoted by $\overline{I}$, if $x$ satisfies an equation of the form $x^k+a_1x^{k-1}+\dots+a_k=0$ where $a_i \in I^i$ for $1\leq i \leq k$. If an ideal $J\subseteq I$ is a reduction, then $\overline{J}=\overline{I}$.

Let $R$ be a Noetherian ring of prime characteristic $p>0$ and let $q$ be a varying power of $p$. Denote by $R^o$ the complement of the union of the minimal primes of $R$ and let $I$ be an ideal of $R$. Define $I^{[q]}=(i^q; i \in I)$, the ideal generated by the $q^{th}$ powers of all the elements of $I$. The \textit{tight closure} of $I$ is the ideal $I^*=\{x \in R;\, \mbox{for some}\; c \in R^o, cx^q \in I^{[q]},\; \mbox{for}\, q>>0\}$. We always have that $I \subseteq I^*\subseteq \overline{I}$. If $I^*=I$ then the ideal $I$ is said to be \textit{tightly closed}. We say that elements $x_1,\dots, x_n$ of $R$ are \textit{parameters} if the height of the ideal generated by them is at least $n$ (we allow them to be the whole ring, in which case the height is said to be $\infty$). The ring $R$ is said to be \textit{F-rational} if the ideals generated by parameters are tightly closed.

The remaining ingredients of this section are from \cite{AH1} and \cite{AH2}.

\begin{definition} \label{basgen}\textup{(\cite{AH2}, Definition 2.10)} Let $R$ be a Noetherian local ring and let $I$ be an ideal of height $g$. We say that a reduction $J= (a_1,\dots,a_\ell)$ of $I$ is generated by a \textit{basic generating set} if for all prime ideals $P$ containing $I$ such that $i$= ht($P) \leq \ell$, $(a_1,\dots,a_i)_P$ is a reduction of $I_P$.
\end{definition}

When the residue field  of $R$ is infinite, there always exist such basic generating sets, and furthermore, ht$((a_1,\dots,a_i)I^n :I^{n+1} + I) \geq i+1$ for $n \gg 0$.

\begin{proposition} \label{mainprop1} \textup{(\cite{AH1}, Proposition 3.2)} Let $(R,\m)$ be an equidimensional and catenary local ring with infinite residue field and let $I\subseteq R$ be an ideal of height $g$ and analytic spread $\ell$. Let $J\subseteq I$ be a minimal reduction of $I$. We assume that  $J=(a'_1,\dots,a'_\ell)$ is generated by a basic generating set as in Definition \ref{basgen} above. Let $N$ and $w$ be fixed integers, and suppose that for $g+1 \leq i \leq \ell$, we are given finite sets of primes $\Lambda_i =\{Q_{ji}\}$ all containing $I$ and of height $i$. Then there exist elements $a_1,\dots,a_\ell$ and $t_{g+1},\dots,t_\ell$ such that the following conditions hold. (We set $t_i=0$ for $i\leq g$ for convenience).
\begin{itemize}
\item [1.] $a_i\equiv a'_i$ modulo $I^2$.
\item [2.] For $g+1 \leq i \leq \ell$, $t_i \in m^N$.
\item [3.] $b_1,\dots,b_g,b_{g+1},\dots,b_\ell$ are parameters, where $b_i=a_i+t_i.$
\item [4.] The images of $t_{g+1},\dots,t_\ell$ in $R/I$ are part of a system of parameters.
\item [5.] There is an integer $M$ such that $t_{i+1} \in (J_i^n I^M :I^{M+n})$ for all $0\leq n \leq w+\ell$ where $J_i= (a_1,\dots,a_i)$.
\item [6.] $t_{i+1} \notin \cup_j Q_{ji}$, the union being over the primes in $\Lambda_i$.
\end{itemize}
\end{proposition}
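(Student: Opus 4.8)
The plan is to determine $a_1,\dots,a_\ell$ and $t_{g+1},\dots,t_\ell$ by an induction, producing them in the order $a_1,\dots,a_g$, then $(t_{g+1},a_{g+1}),\dots,(t_\ell,a_\ell)$, with $t_j:=0$ for $j\le g$; each new element is chosen by prime and coset avoidance over the infinite residue field, subject to the constraints forced by conditions 1--6 and using only elements already fixed. To start, $a_1,\dots,a_g$ are chosen among the $a'_j+I^2$ so as to be parameters — possible because $\hgt I=g$ keeps $I^2$ out of every prime of height $<g$ — so that $b_1,\dots,b_g=a_1,\dots,a_g$ are parameters. Two observations organize the induction. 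First, condition 1 forces $(a_1,\dots,a_i)+I^2=(a'_1,\dots,a'_i)+I^2$ for each $i$, and a Nakayama argument then shows $(a_1,\dots,a_i)_P$ is again a reduction of $I_P$ whenever $(a'_1,\dots,a'_i)_P$ is; hence whatever perturbations I make, the $a_i$ still form a basic generating set, so by the remark after Definition~\ref{basgen} there are integers $M'_i$ with $\hgt\!\big((a_1,\dots,a_i)I^m:I^{m+1}+I\big)\ge i+1$ for all $m\ge M'_i$. Second, writing $J_i=(a_1,\dots,a_i)$, $L_i=J_iI^{M'_i}:I^{M'_i+1}$ and $M=(w+\ell)\max_i M'_i$, one checks $L_i^{\,n}\subseteq J_i^{\,n}I^{nM'_i}:I^{nM'_i+n}$ and that $J_i^{\,n}I^m:I^{m+n}$ is nondecreasing in $m$, so that $L_i^{\,w+\ell}\subseteq J_i^{\,n}I^M:I^{M+n}$ for all $0\le n\le w+\ell$. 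Thus condition 5 becomes automatic as soon as $t_{i+1}$ is drawn from $K_i:=L_i^{\,w+\ell}\cap\m^N$, and $\hgt(K_i+I)\ge i+1$ because $\sqrt{L_i^{\,w+\ell}+I}=\sqrt{L_i+I}$ while $\m^N$ lies in no non-maximal prime.

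The inductive step — choosing $t_{i+1}$, then $a_{i+1}$ — is prime and coset avoidance. I draw $t_{i+1}$ from $K_i$ (forcing conditions 2 and 5) so as to avoid the primes of $\Lambda_i$ (condition 6), to avoid, modulo $I$, the minimal primes of $(t_{g+1},\dots,t_i)+I$ (condition 4), and, for the primes $P$ containing $I$ that are minimal over $(b_1,\dots,b_i)$ of height $i$, to avoid the coset $-a'_{i+1}+P$. Every such $P$ has height $\le i\le d-1$, hence is non-maximal, and each $P\supseteq I$ on the list satisfies $K_i\not\subseteq P$ (from $\hgt(K_i+I)\ge i+1$ together with $I\subseteq P$), so these finitely many requirements can be met at once. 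I then choose $a_{i+1}\in a'_{i+1}+I^2$ so that $b_{i+1}=a_{i+1}+t_{i+1}$ also lies outside the minimal primes $P$ of $(b_1,\dots,b_i)$ of height $i$ with $I\not\subseteq P$; this is possible because $I^2\not\subseteq P$ for such $P$, so $a_{i+1}$ can be moved off $P$, and — tracing the cases $L_i^{\,w+\ell}\subseteq P$ versus $L_i^{\,w+\ell}\not\subseteq P$ — $b_{i+1}$ is kept out of every minimal prime of $(b_1,\dots,b_i)$ of height $i$, giving condition 3. Condition 1 holds by construction; the latitude it allows is used exactly when some such $P$ has $I\not\subseteq P$ and $L_i^{\,w+\ell}\subseteq P$, for then every $t_{i+1}\in K_i$ lies in $P$ and only a change in $a_{i+1}$ can push $b_{i+1}$ out of $P$. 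The integer $M$ is fixed once all stages are complete, and no circularity arises because $L_i$ and $K_i$ depend only on $a_1,\dots,a_i$.

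I expect the main obstacle to be the second observation above — extracting a single exponent $M$ and, at each level, a single ideal $K_i$ of height $\ge i+1$ after adjoining $I$ that forces condition 5 uniformly over $0\le n\le w+\ell$ — which rests on the colon-ideal inclusion $L_i^{\,n}\subseteq J_i^{\,n}I^{nM'_i}:I^{nM'_i+n}$ and the monotonicity in $m$, together with the case analysis that converts the abstract bound $\hgt(K_i+I)\ge i+1$ into the concrete avoidances (in particular pinning down exactly when condition 1 must be invoked). The one substantive input from outside, the estimate $\hgt\!\big((a_1,\dots,a_i)I^m:I^{m+1}+I\big)\ge i+1$ for $m\gg 0$, belongs to the theory of basic generating sets recalled in this section and may be taken as given.
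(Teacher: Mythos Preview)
The paper does not prove this proposition: it is quoted verbatim (with a small change in part~4) from \cite{AH1}, Proposition~3.2, and the only comment the paper adds is the remark immediately following it, noting that part~4 has been reworded from the original but still holds. There is therefore no proof in this paper to compare your proposal against.

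For what it is worth, your reconstruction follows the standard pattern for such results and is essentially sound. The two points that carry the argument --- that a perturbation of the $a'_i$ by elements of $I^2$ preserves the basic-generating-set property (via Nakayama in each $R_P$), and the manufacture of a single ideal $K_i=L_i^{\,w+\ell}\cap\m^N$ with $\hgt(K_i+I)\ge i+1$ from which $t_{i+1}$ may be drawn so that condition~5 holds uniformly in $n$ --- are both correct as you state them; the colon inclusion $L_i^{\,n}\subseteq J_i^{\,n}I^{nM'_i}:I^{nM'_i+n}$ and the monotonicity of $J_i^{\,n}I^m:I^{m+n}$ in $m$ are easy checks. The one place to be a little more careful is the simultaneous coset/prime avoidance in the inductive step: you are imposing conditions of the form $t_{i+1}\notin P$ for some primes and $t_{i+1}+a'_{i+1}\notin P$ for others, and then separately perturbing $a_{i+1}$ inside $a'_{i+1}+I^2$ for the primes not containing $I$. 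This is fine over an infinite residue field, but make sure the two choices do not interfere: the perturbation of $a_{i+1}$ by an element of $I^2$ leaves $b_{i+1}$ unchanged modulo any $P\supseteq I$, so the coset conditions already secured by $t_{i+1}$ survive; conversely, for $P\not\supseteq I$ you are handling $b_{i+1}\notin P$ entirely through the choice of $a_{i+1}$, so no condition on $t_{i+1}$ is needed there. Stating this cleanly would tighten the write-up.
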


\begin{remark} We have altered the statement made in part 4 of Proposition~\ref{mainprop1} from that of the original, but the given statement holds.
\end{remark}

Proposition~\ref{mainprop1} allows us to choose a parameter ideal, $\m$-adically as close to $I$ as desired, for which a sort of Brian\c con-Skoda result applies.  Specifically:

\begin{theorem}\label{inclusion} \textup{(\cite{AH1}, Theorem 3.3)} Let $(R,\m)$ be an equidimensional and catenary local ring of characteristic $p$ having an infinite residue field. Let $I$ be an ideal of analytic spread $\ell$ and positive height $g$. Let $J$ be a minimal reduction of $I$. Fix $w$ and  $N \geq 0$. Choose $a_i$ and $t_i$ as in Proposition \ref{mainprop1}. Set $\mathfrak{A}=B_\ell=(b_1,\dots,b_g,\dots,b_\ell).$ Then 
$$ \overline{I^{\ell+w}} \subseteq (\mathfrak{A}^{w+1})^*.$$ 
\end{theorem}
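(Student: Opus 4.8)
The plan is to reduce the assertion to an explicit manipulation of Frobenius powers. As a first step I would replace $J=(a_1',\dots,a_\ell')$ by $J':=(a_1,\dots,a_\ell)$: since $a_i\equiv a_i'\pmod{I^2}$ and $I^2\subseteq\m I$, the elements $a_i$ and $a_i'$ have the same image in $I/\m I$, so $J'$ is again a reduction of $I$ and hence $\overline{I^{\ell+w}}=\overline{(J')^{\ell+w}}$. Moreover $(J')^{m}I^{\ell+w-m}$ is a reduction of $I^{\ell+w}$ for every $0\le m\le\ell+w$, so $z$ lies in the integral closure of each of these ideals as well. Checking the integral closure and tight closure statements modulo each minimal prime (this preserves "equidimensional and catenary"), I would assume $R$ is a complete local domain and then fix $c\in R^{\circ}$ with
$$c\,z^{q}\in\bigl((J')^{m}I^{\ell+w-m}\bigr)^{q}=(J')^{mq}\,I^{(\ell+w-m)q}$$
for every power $q=p^{e}$; the extra $I$-power here is spare room to be spent later.

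Next comes the Brian\c{c}on--Skoda count together with the change of variables $b_i=a_i+t_i$. Taking $m=\ell+w$ to begin with, every generator of $(J')^{(\ell+w)q}$ is a monomial $a_1^{e_1}\cdots a_\ell^{e_\ell}$ with $\sum e_i=(\ell+w)q$, and the elementary estimate $q\sum_i\lfloor e_i/q\rfloor=(\ell+w)q-\sum_i(e_i\bmod q)\ge wq+\ell$ gives $\sum_i\lfloor e_i/q\rfloor\ge w+1$; thus the monomial is divisible by a product of at least $w+1$ of the $q$-th powers $a_1^{q},\dots,a_\ell^{q}$. The point of being in characteristic $p$ is the identity $a_i^{q}=(b_i-t_i)^{q}=b_i^{q}-t_i^{q}$. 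Substituting and expanding, each monomial becomes a sum of terms of the form (a product of some $b_i^{q}$)$\cdot$(a product of some $t_i^{q}$)$\cdot$(an element of $R$); a term containing at least $w+1$ of the $b_i^{q}$ already lies in $(\mathfrak A^{[q]})^{w+1}=(\mathfrak A^{w+1})^{[q]}$, where $\mathfrak A^{[q]}=(b_1^{q},\dots,b_\ell^{q})$.

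It then remains to show that a product of $t_i^{q}$'s, multiplied by a suitable power of $I^{[q]}$, lies in $(\mathfrak A^{w+1})^{[q]}$. For this I would use the Frobenius of Proposition~\ref{mainprop1}(5), namely $t_{i+1}^{q}\,(I^{[q]})^{M+n}\subseteq(J_i^{[q]})^{n}\,(I^{[q]})^{M}$ for $0\le n\le w+\ell$, where $J_i=(a_1,\dots,a_i)$. Because $J_i$ involves only $a_1,\dots,a_i$, one may trade a $t_{i+1}^{q}$ for a product of $n$ elements of $(a_1^{q},\dots,a_i^{q})$, re-expand $a_j^{q}=b_j^{q}-t_j^{q}$, and recurse on the index; the recursion terminates at $J_g^{[q]}=(b_1^{q},\dots,b_g^{q})\subseteq\mathfrak A^{[q]}$, which consists purely of $b$'s (here one uses $g\ge1$). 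Choosing $n$ large enough at each stage (the hypothesis allows $n$ up to $w+\ell$) one accumulates enough factors $b_j^{q}$, and the auxiliary powers of $I^{[q]}$ consumed along the way are supplied by the spare factor $I^{(\ell+w-m)q}$ (choosing $m<\ell+w$ accordingly, and taking $N$ large, as Proposition~\ref{mainprop1} permits); colon capturing for the parameter sequence $b_1,\dots,b_\ell$ is what keeps these colon operations inside tight closures. Assembling all terms yields $c\,z^{q}\in(\mathfrak A^{w+1})^{[q]}$ for all $q\gg0$, that is, $z\in(\mathfrak A^{w+1})^{*}$.

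The step I expect to be the real obstacle is precisely this last one: the simultaneous bookkeeping of how many $q$-th powers $b_j^{q}$ are produced as the colon conditions are iterated down through the indices, and of how much of the auxiliary $I^{[q]}$-power is consumed, so that both stay within what is available from $z\in\overline{I^{\ell+w}}$ and from the $\m$-adic closeness of $\mathfrak A$ to $J'$. This is what forces one to use the precise range $0\le n\le w+\ell$ in Proposition~\ref{mainprop1}(5), the freedom to enlarge $N$, and colon capturing for $b_1,\dots,b_\ell$ in concert. By contrast the pigeonhole count, the reduction of $I^{\ell+w}$ by $(J')^{m}I^{\ell+w-m}$, and the identity $a_i^{q}=b_i^{q}-t_i^{q}$ are routine.
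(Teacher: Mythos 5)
Your outer skeleton does match the argument this paper relies on (note that the paper does not reprove this theorem: it cites \cite{AH1}, Theorem 3.3, and records the two containments actually needed later as Lemma \ref{facts}): a multiplier coming from $z\in\overline{I^{\ell+w}}$, a pigeonhole count on monomials of $J^{(\ell+w)q}$, the identity $a_i^q=b_i^q-t_i^q$, and condition 5 of Proposition \ref{mainprop1} to trade $t_{i+1}^q$ for lower-index data, terminating at $J_g=B_g$. However, the step you yourself flag as the obstacle is genuinely missing, and the devices you propose for it do not work. (i) Your only source of ``spare'' $I$-powers is the factor $I^{(\ell+w-m)q}$ with $m<\ell+w$, but the pigeonhole count that produces $w+1$ full $q$-th powers forces $m=\ell+w$, so there is no spare room at all; and the leftover part of a monomial (degree about $(\ell-1)q$ in the $\ell$ generators) need not contain even one full $q$-th power, so it does not supply bracket powers $(I^{[q]})^{M+n}$. (ii) The Frobenius form $t_{i+1}^q\,(I^{[q]})^{M+n}\subseteq (J_i^{[q]})^n(I^{[q]})^M$ is the wrong currency: $M$ is an uncontrolled constant, enlarging $N$ only pushes the $t_i$ deeper into $\m^N$ and supplies nothing, and ``colon capturing for $b_1,\dots,b_\ell$'' is neither clearly available from ``equidimensional and catenary'' alone nor needed in the correct proof (also, your preliminary passage to the completion is unnecessary and would itself require justification for descending tight closure back to $R$).

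The missing idea is a second \emph{fixed} multiplier, and this is where the hypothesis $g=\hgt I>0$ enters: choose once and for all $c\in I^M\cap R^o$ ($M$ from condition 5). Iterating condition 5 in ordinary powers, $t_j^s I^{M+ks}\subseteq J_{j-1}^{ks}I^M$ for all $s$, gives $c\,t_j^q I^{kq}\subseteq J_{j-1}^{kq}$ for $1\le k\le w+\ell$, which is Lemma \ref{facts}(1) --- no Frobenius brackets on $I$ are ever needed, since the rest of the monomial is used only as an element of an ordinary power $I^{kq}$ and the constant $I^M$ is absorbed by $c$. Then a double induction (on $i$, and within it on $r$) yields $c^{i-g}J_i^{(i+r)q}\subseteq (B_i^{r+1})^{[q]}$ for $g\le i\le\ell$, $0\le r\le w$ (Lemma \ref{facts}(2)): peel off one block $a_{i+1}^q=b_{i+1}^q-t_{i+1}^q$; the $b_{i+1}^q$ branch lowers $r$, and the $t_{i+1}^q$ branch costs exactly one factor of $c$ and drops to index $i$ via (1), everything else being treated merely as an element of $I^{(i+r)q}$. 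Combining this (at $i=\ell$, $r=w$) with $dz^q\in I^{(\ell+w)q}$ and one more fixed element of a suitable power of $I$ to pass from $I^{(\ell+w)q}$ to $J^{(\ell+w)q}$ (using that $J$ is a reduction), one gets a single element of $R^o$, independent of $q$, multiplying $z^q$ into $(\mathfrak{A}^{w+1})^{[q]}$ for all $q$, which is exactly the definition of $z\in(\mathfrak{A}^{w+1})^*$. Without this fixed-multiplier bookkeeping your recursion has no way to pay for the $I^M$'s and for regenerating the $w+1$ factors at each descent, so as written the proposal does not close.
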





One of our main goals in this paper is to generalize the next theorem, which is due to Aberbach and Huneke. We recall that an ideal $I$ satisfies a property generically if $I_p$ satisfies that same property for every minimal prime $P$ of $I$.

\begin{theorem}\label{oldth} \textup{(\cite{AH2}, Theorem 3.6)}
Let $(R,\m)$ be an excellent F-rational local ring, $I\subseteq R$ an ideal with reduction $J$.  Let $g = \text{ht}(I) <\ell = \ell(I)$.  Suppose that 
\begin{itemize}
\item $R/I$ is equidimensional. 
\item $R/I^{un}$ satisfies $S_{\ell-g-1}$, and
\item $I$ is generically of reduction number at most one. 
\end{itemize}
Then $\overline{ I^{\ell}} \subseteq JI^{un}.$ 

In particular if $R/I$ is equidimensional, $I$ generically has reduction number at most one and $I$ has analytic deviation 2, then $\overline{I^\ell} \subseteq JI^{un}$.\\
\end{theorem}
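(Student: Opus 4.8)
I would mimic the proof of Theorem~\ref{inclusion}: first perturb $J$ into an honest parameter ideal to which F-rationality applies, then do the real work of pushing the coefficients into $I^{un}$. (The characteristic-zero, F-rational-type case reduces to positive characteristic as in \cite{AH1}, so assume $p>0$.) For the reductions: F-rationality makes $R$ a Cohen--Macaulay normal local domain, and $R\to R[x]_{\m R[x]}$ is faithfully flat with a field as closed fiber, so it preserves excellence, F-rationality, $\hgt I$, $\ell(I)$, the formation of $\overline{I^{n}}$ and of $I^{un}$, all three bulleted hypotheses, and carries reductions to reductions; hence I may assume the residue field infinite. A reduction $J$ then contains a minimal reduction $J_{0}$ with $J_{0}I^{un}\subseteq JI^{un}$, so it suffices to treat $J_{0}$, and over an infinite residue field I may take $J=(a'_{1},\dots,a'_{\ell})$ generated by a basic generating set (Definition~\ref{basgen}). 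The ``in particular'' clause is then immediate: $\ell-g=2$ forces $\ell-g-1=1$, and $R/I^{un}$ automatically satisfies $S_{1}$ since $I^{un}$ has no embedded primes.

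Now fix $z\in\overline{I^{\ell}}$ and apply Proposition~\ref{mainprop1} with $w=0$, with $N$ large, and with each $\Lambda_{i}$ chosen large enough that $t_{g+1},\dots,t_{\ell}$ avoid the height-$i$ embedded primes of $I$ and of $JI^{un}$ and the relevant height-$i$ associated primes of the colon modules used below. This yields $b_{i}=a_{i}+t_{i}$ (with $t_{i}=0$ for $i\le g$, $a_{i}\equiv a'_{i}\bmod I^{2}$, so $a_{i}\in I$), a system of parameters $\mathfrak A=(b_{1},\dots,b_{\ell})$ satisfying the colon conditions of part~(5), with $t_{g+1},\dots,t_{\ell}$ part of a system of parameters modulo $I$, hence modulo $I^{un}$. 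By Theorem~\ref{inclusion}, $\overline{I^{\ell}}\subseteq\mathfrak A^{*}$, and since $\mathfrak A$ is generated by a system of parameters and $R$ is F-rational, $\mathfrak A^{*}=\mathfrak A$. Thus $z\in\mathfrak A$, say $z=\sum_{i=1}^{\ell}a_{i}r_{i}+\sum_{i=g+1}^{\ell}t_{i}r_{i}$.

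It remains to improve $z\in\mathfrak A$ to $z\in JI^{un}$; this is the substantive step. It suffices to check $z\in(JI^{un})_{Q}$ for every associated prime $Q$ of $R/JI^{un}$, and I would induct on $\dim R$. If $\hgt Q<\dim R$, the hypotheses localize --- using excellence for the equidimensionality of $(R/I)_{Q}$ and the identity $(I^{un})_{Q}=(I_{Q})^{un}$ --- and $R_{Q}$ is again F-rational of smaller dimension, so, via the inductive hypothesis applied to $I_{Q}$ together with $z\in\overline{I_{Q}^{\ell}}\subseteq\overline{I_{Q}^{\ell(I_{Q})}}$ (and with the finitely many equimultiple primes $\hgt I_{Q}=\ell(I_{Q})$ disposed of by a direct argument), one gets $z\in(JI^{un})_{Q}$. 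Hence I may assume $Q=\m$, so $z$ already lies in $JI^{un}$ locally at every prime other than $\m$. Now I use the data of the previous paragraph. Since $R/I^{un}$ satisfies $S_{\ell-g-1}$ and $\dim R/I^{un}=d-g\ge\ell-g>\ell-g-1$, the ring $R/I^{un}$ has depth at least $\ell-g-1$, so the generically chosen $\bar t_{g+1},\dots,\bar t_{\ell-1}$ form a regular sequence on $R/I^{un}$. Combining this regular sequence with the colon conditions of Proposition~\ref{mainprop1}(5) --- which turn $t_{i}$-multiples of high powers of $I$ into elements of $J_{i-1}^{n}I^{M}$ --- and feeding in that $z\in\overline{I^{\ell}}$ (not merely $z\in\mathfrak A$), while replacing each $a_{i}$ by $a'_{i}$ modulo $I^{2}$ in the presentation of $z$ and absorbing the $I^{2}$-error through the same rewriting, forces $z\in JI^{un}$.

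The genuine obstacle is the last paragraph's final move: the bare membership $z\in(b_{1},\dots,b_{\ell})$ constrains the coefficients $r_{i}$ not at all, so the information $z\in\overline{I^{\ell}}$, the colon conditions of Proposition~\ref{mainprop1}(5), and the $S_{\ell-g-1}$ hypothesis must be combined in exactly the right sequence; in particular one must verify that $S_{\ell-g-1}$ --- and not full Cohen--Macaulayness of $R/I^{un}$ --- is precisely what persists after localizing at a height-$(\ell-1)$ prime and killing one generic element, and one must track how the errors from $a_{i}\equiv a'_{i}\bmod I^{2}$ propagate through the colon rewriting. Checking that all five hypotheses really are preserved by the passage to an infinite residue field, by the choice of a minimal reduction with a basic generating set, by the equimultiple special cases, and by the localizations of the third paragraph is a second, more routine, hurdle.
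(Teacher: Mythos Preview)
This theorem is not proved in the present paper; it is quoted from \cite{AH2}. What the paper actually proves is the stronger Theorem~\ref{newth}, which drops all three bulleted hypotheses and replaces $I^{un}$ by the (potentially smaller) ideal $I_{\ell-1}$; Theorem~\ref{oldth} then follows via the Remark after Theorem~\ref{newth}, since $I_{\ell-1}\subseteq I^{un}$ when $R/I$ is equidimensional. So the relevant comparison is between your outline and the proof of Theorem~\ref{newth}.

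Your proposal has a genuine gap, and you name it yourself in the last paragraph: the clause ``forces $z\in JI^{un}$'' is never executed. You assert that the regular sequence $\bar t_{g+1},\dots,\bar t_{\ell-1}$ on $R/I^{un}$, the colon conditions of Proposition~\ref{mainprop1}(5), and the membership $z\in\overline{I^{\ell}}$ can be combined ``in exactly the right sequence'' to push each $r_i$ into $I^{un}$, but no such sequence is given, and the $I^{2}$-errors from $a_i\equiv a'_i$ are not tracked. The induction on $\dim R$ is also only a sketch: after localizing at $Q$ the analytic spread may drop, the bulleted hypotheses must be re-verified for $I_Q$, and the equimultiple primes are waved away as ``a direct argument.'' As written this is a plan, not a proof.

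The paper's route is structurally different and sidesteps all of this. There is no induction on $\dim R$, and none of the three extra hypotheses is used. The two new tools are Proposition~\ref{new}, which lets one raise $t_{g+1},\dots,t_{\ell-1}$ to powers so that
\[
(I_{\ell-1},t_{g+1}^{2},\dots,t_{\ell-1}^{2}):t_{\ell}\ \subseteq\ (I_{\ell-1},t_{g+1},\dots,t_{\ell-1}),
\]
and Lemma~\ref{mine}, which gives $t_{\ell}\,\overline{I^{\ell}}\subseteq (B_{\ell-1}^{2})^{*}$. One then sets $b_k=a_k+t_k^{2}$, writes $z=\sum r_i b_i$, multiplies by $t_{\ell}$, and uses Lemma~\ref{mine} plus F-rationality to get $t_{\ell}r_i b_i\in(b_1,\dots,b_i^{2},\dots,b_{\ell})$. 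Since $b_1,\dots,b_{\ell}$ is a regular sequence this yields $t_{\ell}r_i\in(I_{\ell-1},t_{g+1}^{2},\dots,t_{\ell}^{2})$, and the colon inclusion above then gives $r_i\in I_{\ell-1}+\m^{N}$. Krull intersection finishes. The $S_{\ell-g-1}$ condition on $R/I^{un}$ and the generic reduction number play no role; the missing mechanism in your outline is precisely the pair Proposition~\ref{new}\,/\,Lemma~\ref{mine}.
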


\section{The main theorem}

 We will show that Serre's condition $S_{\ell-g-1}$ on $R/I^{un}$, and the assumption that $I$ is an ideal generically of reduction number at most one in Theorem \ref{oldth} of Aberbach and Huneke are not necessary.  We will also show that $I^{un}$ may be replaced by a (potentially) smaller ideal, which in many instances, also allows us to remove the hypothesis that $I$ is unmixed.

\begin{lemma} \label{stab} Let $R$ be a Noetherian ring, $J$ an ideal of $R$, and $x$ an element of $R$.  Then there exists a positive integer M such that $J:_R x^{\infty} = J:_R x^M.$
\end {lemma}

\begin{proof}[{\bf Proof}]
$J:_R x \subseteq J:_R x^2 \subseteq \dots$ is an increasing sequence of ideals in the Noetherian ring $R$, so there exists $M$ such that 
$J:_R x^{M+k} = J:_R x^M \; \mbox {for all}\; k  \geq 0.$
Hence $J:_R x^{\infty} = \bigcup_ {i \in \mathbb {N}} (J:_R x^i) = J :_R x^M.$ 
\end{proof}

\begin{remark} \label{highpow} After renaming $x^M$ back to $x$, we can assume that $J:_R x^{\infty} = J:_R x.$
\end{remark}

\begin{proposition} \label{new} Let $R$ be a Noetherian ring and $I$ an ideal of $R$. Let $t_1$ be an element of $R$ such that its image is regular in $R/I$. Then for any elements $t_2,\dots,t_n$ of $R$, there exist elements $u_2,\dots,u_n$ of $R$ such that for all $2\leq i \leq n$, $u_i$ is a power of $t_i$, and $(I,u_2^2,\dots,u_n^2): t_1 \subseteq (I,u_2,\dots,u_n).$
\end{proposition}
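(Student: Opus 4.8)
The plan is to induct on $n$. The base case $n=2$ is the heart of the matter, so I would isolate it: given that $t_1$ is regular on $R/I$ and given an element $t_2$, I want to find a power $u_2 = t_2^M$ with $(I, u_2^2) :_R t_1 \subseteq (I, u_2)$. Here is where Lemma~\ref{stab} and Remark~\ref{highpow} enter: applying the lemma to the ideal $J = (I, t_2 \cdot t_1)$ — or more precisely choosing $M$ large enough that the relevant colon ideals stabilize — lets me assume after renaming that $t_2$ (and its powers) behave well with respect to stable colons. Suppose $x t_1 \in (I, u_2^2) = (I, t_2^{2M})$, say $x t_1 = \iota + r t_2^{2M}$ with $\iota \in I$. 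I want to conclude $x \in (I, t_2^M)$. Modulo $I$, write $\bar x \bar t_1 = \bar r \,\bar t_2^{2M}$ in $R/I$; since $\bar t_1$ is a nonzerodivisor on $R/I$, I get $\bar t_2^{2M} \mid \bar x \bar t_1$, and I want to promote this to $\bar t_2^{M} \mid \bar x$. The mechanism is: $\bar t_2^{2M} \mid \bar x \bar t_1$ means $\bar x \bar t_1 \in (\bar t_2^{2M})$, so $\bar x \in (\bar t_2^{2M}) :_{R/I} \bar t_1$. Now choose $M$ so that $(\bar t_2^{2M}) :_{R/I} \bar t_1^\infty = (\bar t_2^{2M}) :_{R/I} \bar t_1$ via the stabilization lemma; combined with the regularity of $\bar t_1$, an element of this colon, multiplied by a suitable power of $t_1$, lands in $(\bar t_2^{2M})$, and one extracts that $\bar x \in (\bar t_2^M)$ by a direct degree/divisibility argument in $R/I$ localized appropriately, or by noting $\bar x \bar t_1 \in (\bar t_2^{2M}) \subseteq (\bar t_2^M)(\bar t_2^M)$ and peeling off one factor using that $\bar t_1$ avoids the relevant associated primes for $M \gg 0$.

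For the inductive step, suppose the result holds for $n-1$. Given $t_2, \dots, t_n$, first apply the $n=2$ case (with the ideal $I$ replaced appropriately) to handle $t_n$: there is a power $u_n = t_n^{M}$ with a colon containment relative to $t_1$. Then I would like to apply the induction hypothesis to the ring $R$ and the ideal $I' = (I, u_n)$ — but this requires $t_1$ to remain regular on $R/I' = R/(I, u_n)$, which need not hold. So instead the cleaner route is to run the whole argument simultaneously: choose $u_2, \dots, u_n$ to be powers $t_i^{M}$ for a single large $M$ (large enough that all the stabilization lemmas, applied to the finitely many relevant colon ideals, take effect), and then if $x t_1 \in (I, u_2^2, \dots, u_n^2)$ write $x t_1 = \iota + \sum_{i=2}^n r_i u_i^2$ and argue modulo $I$ exactly as in the base case, but now treating the $\bar u_i$ jointly. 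Working in $R/I$, I have $\bar x \bar t_1 \in (\bar u_2^2, \dots, \bar u_n^2) \subseteq (\bar u_2, \dots, \bar u_n)^2$, and I want $\bar x \in (\bar u_2, \dots, \bar u_n)$; since $\bar t_1$ is a nonzerodivisor, $\bar x$ lies in the colon $(\bar u_2, \dots, \bar u_n)^2 :_{R/I} \bar t_1$, and for $M$ chosen so this colon stabilizes and $\bar t_1$ avoids all associated primes of $(\bar u_2,\dots,\bar u_n)$ that are not forced, this colon equals $(\bar u_2, \dots, \bar u_n)^2 :_{R/I} \bar t_1^\infty$, whence a power of $\bar t_1$ times $\bar x$ is in $(\bar u_2, \dots, \bar u_n)^2$, and regularity again lets us drop the power and then peel one factor.

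The main obstacle, and the step I would spend the most care on, is the passage from $\bar x \bar t_1 \in (\bar u_2, \dots, \bar u_n)^2$ to $\bar x \in (\bar u_2, \dots, \bar u_n)$ — i.e., making precise why multiplying by the regular element $\bar t_1$ "cannot move $\bar x$ into the square without $\bar x$ itself already being in the ideal," once the $u_i$ are sufficiently high powers. The honest content here is that a nonzerodivisor is not contained in any associated prime of $R/I$, so for each associated prime $\mathfrak p$ of $R/(\bar u_2,\dots,\bar u_n)$ we have control of the $\mathfrak p$-primary component; choosing $M$ large forces $(\bar u_2,\dots,\bar u_n)^2$ to have the "expected" primary decomposition relative to $(\bar u_2,\dots,\bar u_n)$ in the localizations that matter, and then $\bar t_1$ being a nonzerodivisor means it is a unit in those localizations or at least a nonzerodivisor there, killing the discrepancy. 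I would state this carefully as a lemma about a regular element and symbolic-vs-ordinary powers, or simply localize at each associated prime of $R/I$ and run the one-variable divisibility argument there, then glue. Everything else — the inductions, the renaming via Remark~\ref{highpow}, collecting the finitely many $M$'s into one — is bookkeeping.
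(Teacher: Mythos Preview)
Your proposal has a genuine gap, and it lies exactly where you say the ``main obstacle'' is: the passage from $\bar x\,\bar t_1 \in (\bar u_2,\dots,\bar u_n)^2$ to $\bar x \in (\bar u_2,\dots,\bar u_n)$. None of the mechanisms you suggest---stabilizing $(\bar t_2^{2M}):_{R/I}\bar t_1^\infty$, invoking associated primes of $(\bar u_2,\dots,\bar u_n)$, or ``peeling off one factor'' after noting $\bar t_1$ is a nonzerodivisor---actually closes it. In particular, your use of Lemma~\ref{stab} is pointed at the wrong element: you stabilize a colon by $\bar t_1$ inside an ideal that itself depends on $M$, and even read generously this only tells you that some power of $\bar t_1$ times $\bar x$ lies in $(\bar u_2^2,\dots,\bar u_n^2)$, which you already knew for the first power. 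Regularity of $\bar t_1$ on $R/I$ gives you nothing about $R/(I,\bar u_2,\dots,\bar u_n)$, so there is no reason to expect $\bar x \in (\bar u_2,\dots,\bar u_n)$ from this.

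The paper's argument applies the stabilization lemma to the \emph{other} variables and in a different ring. One sets $S=R/I$, $S_1 = S/(s_1)$, and chooses each $u_i$ (for $i\ge 2$) to be a high enough power of $t_i$ so that $(s_2^2,\dots,s_{i-1}^2):_{S_1} s_i^\infty = (s_2^2,\dots,s_{i-1}^2):_{S_1} s_i$. Now if $s_1 w = \sum_{j=2}^i \alpha_j s_j^2$ in $S$, reduce modulo $s_1$: the relation $\alpha_i s_i^2 \in (s_2^2,\dots,s_{i-1}^2)S_1$ forces, via that stabilized colon, $\alpha_i s_i \in (s_1,s_2^2,\dots,s_{i-1}^2)S$. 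Writing $\alpha_i s_i = v_i s_1 + x_i$ and substituting back gives $s_1(w - v_i s_i)\in (s_2^2,\dots,s_{i-1}^2)$, so one peels off $s_i$ and descends. After $n-1$ steps one gets $s_1(w - \sum v_j s_j)=0$ in $S$, and \emph{only now} is the regularity of $s_1$ invoked, to conclude $w\in (s_2,\dots,s_n)S$. The point you are missing is that the colon to stabilize is by $s_i$, in $S/(s_1)$, not by $s_1$ in $S$.
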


\begin{proof}[{\bf Proof}] Let $s_1$ be the image of $t_1$ in $S=R/I$ and set $S_1 = S/ (s_1).$

By Lemma \ref{stab} and Remark \ref{highpow}, we can pick $u_2 \in R$, a power of $t_2$, such that its image $s_2$ in $S$ satisfies: $0:_{S_1} s_2^{\infty} = 0:_{S_1} s_2.$ 
For $3 \leq i \leq n$, pick $u_i \in R$, a power of $t_i$, such that its image $s_i$ in $S$ satisfies $$(s_2^2,\dots,s_{i-1}^2):_{S_1} s_i^{\infty} = (s_2^2,\dots, s_{i-1}^2):_{S_1} s_i.$$

We claim that for $2 \leq i \leq n$, if $w \in (s_2^2,\dots,s_i^2):_S s_1$, there exists $v_i$ such that $$w-v_i s_i \in (s_2^2,\dots,s_{i-1}^2):_S s_1.$$
To prove the claim, let $w$ be in $(s_2^2,\dots,s_i^2):_S s_1$, then one can write that 
\begin{equation} \label{eq1}
s_1 w = \alpha_2 s_2^2+\dots+ \alpha_i s_i^2.
\end{equation}
Thus $ \alpha_2 s_2^2+\dots+ \alpha_i s_i^2 = 0$ in $S_1$, which implies that $$\alpha_i \in (s_2^2,\dots, s_{i-1}^2) : _{S_1} s_i^2 \subseteq (s_2^2,\dots, s_{i-1}^2) :_ {S_1} s_i^{\infty} = (s_2^2,\dots, s_{i-1}^2) :_ {S_1} s_i.$$ 
Hence $\alpha_i \in (s_2^2,\dots, s_{i-1}^2):_{S_1} s_i$, or equivalently $\alpha_i s_i \in (s_1, s_2^2,\dots,s_{i-1}^2)S.$

Write $\alpha_i s_i = v_i s_1+ x_i$ where $x_i \in (s_2^2,\dots, s_{i-1}^2).$
We get $\alpha_i s_i^2 = v_i s_1 s_i + x_i s_i$. Replacing this expression for $\alpha_i s_i^2$ back into (\ref{eq1}), and combining the terms involving $s_1$, we see that
$ s_1(w- v_i s_i) \in (s_2^2,\dots,s_{i-1}^2)$, which implies that $w- v_i s_i \in (s_2^2,\dots,s_{i-1}^2):_S s_1$, as desired.

Therefore, we can conclude that if $w \in(s_2^2,\dots,s_n^2):_S s_1$, there exist $v_2,\dots, v_n$ such that  
$s_1(w- v_n s_n-\dots- v_2 s_2) = 0 \;\mbox{in}\; S.$

But $s_1$ is a nonzerodivisor in $S$, so $w-v_n s_n- \dots- v_2 s_2 = 0$ in $S$, implying that $w \in (s_2,\dots,s_n)S$.   Therefore $(s_2^2,\dots,s_n^2) :_S s_1 \subseteq (s_2,\dots,s_n)S$, or equivalently, $(I,u_2^2,\dots,u_n^2): t_1 \subseteq (I,u_2,\dots,u_n)R$ as desired. 
\end{proof}

\begin{definition} If $I$ has height $g$, then given $k \geq g$, set $S_k= R \setminus \bigcup P$ where the union is taken over all primes $P$ that are associated to $I$ and such that ht ($ P) \leq k$. We define $I_k= IS_k^{-1}R \cap R$. This means that given a primary decomposition of $I$, $I_k$ is the intersection of the primary components of $I$ of height $\leq k$.
\end{definition}

Let $(R,\m)$ be an equidimensional and catenary local ring with infinite residue field and let $I \subseteq R$ be an ideal of height $g$ and analytic spread $\ell$. Let $J \subseteq I$ be a minimal reduction of $I$. We assume that $J= (a_1',\dots,a_\ell')$ is generated by a basic generating set as in Definition \ref{basgen}. Let $N$ and $w$ be fixed integers, and suppose that for $g+1 \leq i \leq \ell$ we are given finite sets of primes $\Lambda_i = \{Q_{ji}\}$ all containing $I$ and of height $i$.

\begin{proposition} \label{mainprop2} With the above assumptions, there exist elements $a_1,\dots,a_\ell$ generating $J$  and $t_{g+1},\dots,t_\ell$ of $R$ such that conditions 1 through 6 of Proposition \ref{mainprop1} hold.

In addition, we can choose the elements $t_{g+1},\dots, t_\ell$ in $R$ such that 
$$(I_{\ell-1}, t_{g+1}^2,\dots,t_{\ell-1}^2) :_R t_{\ell} \subseteq (I_{\ell-1}, t_{g+1},\dots,t_{\ell-1}).$$
\end{proposition}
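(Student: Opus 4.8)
The plan is to combine Proposition~\ref{mainprop1} with Proposition~\ref{new}. To see why the inclusion should hold, work modulo $I_{\ell-1}$: if the images $\bar t_{g+1},\dots,\bar t_{\ell-1},\bar t_\ell$ formed a regular sequence on $R/I_{\ell-1}$ with $\bar t_\ell$ in last position, then $(I_{\ell-1},t_{g+1}^2,\dots,t_{\ell-1}^2):_R t_\ell$ would already equal $(I_{\ell-1},t_{g+1}^2,\dots,t_{\ell-1}^2)$; since this regularity must fail in general, the function of Proposition~\ref{new} is precisely to repair it by replacing $t_{g+1},\dots,t_{\ell-1}$ with suitable powers. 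Concretely, I would: (i) run the construction behind Proposition~\ref{mainprop1} to produce $a_1,\dots,a_\ell$ generating $J$ and $t_{g+1},\dots,t_\ell$ satisfying conditions 1--6, choosing each $\Lambda_i$ (for $g+1\le i\le\ell$) to contain every associated prime of $I$ of height $i$, and imposing in addition that the image of $t_\ell$ be a nonzerodivisor on $R/I_{\ell-1}$; (ii) apply Proposition~\ref{new} with $I$ there replaced by $I_{\ell-1}$, with $t_1$ replaced by $t_\ell$, and with $t_2,\dots,t_n$ replaced by $t_{g+1},\dots,t_{\ell-1}$, obtaining $u_i$, a power of $t_i$, with $(I_{\ell-1},u_{g+1}^2,\dots,u_{\ell-1}^2):_R t_\ell\subseteq(I_{\ell-1},u_{g+1},\dots,u_{\ell-1})$; and (iii) rename each $u_i$ back to $t_i$, so that the displayed inclusion is the conclusion of Proposition~\ref{new}, and re-verify conditions 1--6 for the renamed elements.

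The step I expect to be the main obstacle is arranging, in (i), that $t_\ell$ be a nonzerodivisor on $R/I_{\ell-1}$ --- equivalently $t_\ell\notin P$ for all $P\in\mathrm{Ass}(R/I)$ with $\hgt P\le\ell-1$ --- compatibly with conditions 1--6; this is why one cannot simply quote Proposition~\ref{mainprop1} as a black box but must re-enter its construction. The primes of height exactly $\ell-1$ are handled by condition 6 together with the choice of $\Lambda_{\ell-1}$; and when $\ell=g+1$ this is already everything, since then $t_{g+1},\dots,t_{\ell-1}$ is empty and the inclusion merely asserts the nonzerodivisor property, which also follows from condition 4. For the lower-height associated primes, note first that conditions 2 and 5 force $t_\ell$ into $\mathfrak b:=\bigcap_{0\le n\le w+\ell}\bigl(J_{\ell-1}^{\,n}I^{M}:_R I^{M+n}\bigr)\cap\m^N$, and that the remaining requirements placed on $t_\ell$ in the construction are finitely many prime and coset avoidances. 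The key point is that $\mathfrak b\not\subseteq P$ for each such $P$: since $P\supseteq I$ and $h:=\hgt P\le\ell-1$, the basic-generating-set hypothesis makes $(a_1,\dots,a_h)R_P$, hence $J_{\ell-1}R_P$, a reduction of $I_P$, so after enlarging $M$ past the finitely many relevant reduction numbers one gets $(J_{\ell-1}^{\,n}I^{M}:_R I^{M+n})_P=R_P$ for every $n$; also $\m^N\not\subseteq P$ because $\hgt P<\ell\le\dim R$; and for a prime $P$, a finite intersection of ideals none contained in $P$ is not contained in $P$. Hence the additional demand $t_\ell\notin\bigcup P$ is consistent with everything the construction already requires of $t_\ell$.

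It remains to check (iii). Passing from $t_i$ to a power $u_i=t_i^{m_i}$ (for $g+1\le i\le\ell-1$) leaves condition 1 untouched; keeps condition 2, since $t_i\in\m^N\Rightarrow t_i^{m_i}\in\m^N$; keeps condition 6, since $P$ prime and $t_i\notin P$ give $t_i^{m_i}\notin P$; keeps condition 4, since $(t_{g+1}^{m_{g+1}},\dots,t_{\ell-1}^{m_{\ell-1}},t_\ell)$ and $(t_{g+1},\dots,t_\ell)$ have the same radical modulo $I$; and keeps condition 5 after enlarging $M$, using $J_i^{\,m_i n}\subseteq J_i^{\,n}I^{(m_i-1)n}$ and the monotonicity $(J_i^{\,n}I^{M}:_R I^{M+n})\subseteq(J_i^{\,n}I^{M+1}:_R I^{M+1+n})$. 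Condition 3 --- that $b_i=a_i+t_i$ remains a parameter after $t_i$ becomes $t_i^{m_i}$ --- is the one point that is not automatic, since the $a_i$ have height only $g$ and a very high power of $t_i$ is too small a correction to $a_i$. I would sidestep this by folding the power into the construction rather than applying it afterward: choose the $a_i$ first, then $t_\ell$, then $t_{g+1},\dots,t_{\ell-1}$ in order, verifying condition 3 through the reordered parameter sequence $b_1,\dots,b_g,b_\ell,b_{g+1},\dots,b_{\ell-1}$ (permissible, since condition 3 constrains only $\hgt(b_1,\dots,b_\ell)$ and the greedy selection realizing it may take $b_\ell$ right after $b_1,\dots,b_g$), and at each stage selecting $t_i$ --- replacing a preliminary choice by a power, with one extra prime/coset avoidance to keep $b_i$ a parameter under that power --- so that it satisfies conditions 2, 5, 6 and 3 at its stage and so that its image realizes the colon-stabilization of Lemma~\ref{stab} needed to run the argument of Proposition~\ref{new} modulo $I_{\ell-1}$; then no power is taken after the fact and conditions 1--6 hold verbatim for the elements appearing in the displayed inclusion.
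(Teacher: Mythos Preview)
Your plan coincides with the paper's: apply Proposition~\ref{mainprop1}, then use Proposition~\ref{new} to pass to suitable powers of $t_{g+1},\dots,t_{\ell-1}$, and check that conditions 1--6 survive the replacement. Two remarks on the execution. For the nonzerodivisor requirement on $t_\ell$ modulo $I_{\ell-1}$, the paper is more economical than you are: it simply takes $\Lambda_{\ell-1}$ to be any finite set of height-$(\ell-1)$ primes whose \emph{union} contains every associated prime of $I$ of height $\le\ell-1$ (each such prime sits inside a height-$(\ell-1)$ prime containing $I$, since $R$ is equidimensional and catenary with $\ell\le\dim R$), so condition~6 alone forces $t_\ell$ outside all of them at once, and there is no need to re-enter the construction or analyze your ideal $\mathfrak b$. (Your check of condition~5 is also heavier than needed: the colon in condition~5 is an ideal, so membership of $t_{i+1}$ immediately gives membership of any power.) On the other hand, your caution about condition~3 goes beyond the paper, which simply asserts that ``properties 1--6 of Proposition~\ref{mainprop1} remain true after replacing $t_{g+1},\dots,t_{\ell-1}$ by higher powers'' and offers no argument for why $a_i+t_i^{m_i}$ remain parameters; your device of folding the power-taking into the inductive choice of the $t_i$ is a reasonable way to supply the justification the paper omits.
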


\begin{proof}[{\bf Proof}] Pick elements $a_1,\dots,a_\ell$ and $t_{g+1},\dots, t_\ell$ in $R$ as in Proposition \ref{mainprop1}. If necessary, replace $t_{g+1},\dots,t_{\ell-1}$ by higher powers so that $$(I_{\ell-1}, t_{g+1}^2,\dots,t_{\ell-1}^2) :_R t_{\ell} \subseteq (I_{\ell-1}, t_{g+1},\dots,t_{\ell-1}).$$

This is possible since on one hand properties 1-6 of Proposition \ref{mainprop1} remain true after replacing $t_{g+1},\dots,t_{\ell-1}$ by higher powers. And on the other hand, we can apply Proposition \ref{new} once we check that $t_{\ell}$ is a nonzerodivisor in $R/I_{\ell-1}$. But this is true by property 6 of Proposition \ref{mainprop1} if we set $\Lambda_{\ell-1}$ to be any finite set of height $\ell-1$ primes whose union contains all associated primes of $I$ with height at most $\ell-1$.
\end{proof}

A few more results are needed before we can give a proof to our main theorem in this section.

Let $(R,\m)$ be an equidimensional and catenary local ring of characteristic $p$, having an infinite residue field. Let $I$ be an ideal of analytic spread $\ell$ and positive height $g$. Let $J$ be a minimal reduction of $I$. Fix integers $w$ and $N \geq 0$, and choose $a_1,\dots,a_\ell$ and $t_{g+1},\dots,t_\ell$ as in Proposition \ref{mainprop1}. For $i=1,\dots,\ell$, set $b_i= a_i+t_i$ ($t_i=0$ for $i\leq g$), $J_i=(a_1,\dots,a_i)$, and $B_i = (b_1, \ldots, b_i)$.

\begin{lemma} \label{facts} With the above assumptions, there exists an element $c \in I^M \cap R^0$ (where $M$ is the integer from condition 5 of Proposition \ref{mainprop1}) such that the following conditions hold:
\begin{enumerate}
\item [1.] For any $g+1 \leq j \leq \ell$ and $1\leq k \leq w+\ell$, $c t_j^q \, I^{kq} \subseteq J_{j-1}^{k q}$, for any power $q$ of $p$.
\item [2.] For all $g \leq i \leq \ell$ and $0 \leq r \leq w$, we have $c^{i-g} \, J_i^{(i+r)q} \subseteq (B_i^{r+1})^{[q]}$, for any power $q$ of $p$.
\end{enumerate}
\end{lemma}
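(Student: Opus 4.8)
We sketch the argument. First I would take $c$ to be any element of $I^{M}$ avoiding all the minimal primes of $R$; such a $c$ exists by prime avoidance, since $\hgt(I)=g>0$ forces $I$, hence $I^{M}$, into no minimal prime, so $I^{M}\cap R^{0}\neq\varnothing$. The plan is then to prove (1) directly from condition 5 of Proposition~\ref{mainprop1}, and to use (1) as an input to prove (2) by induction on $i$.

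For (1): condition 5 (for the index $j=i+1$, so that $J_{j-1}=(a_{1},\dots,a_{j-1})$) gives $t_{j}I^{M+k}\subseteq J_{j-1}^{k}I^{M}$ for every fixed $k$ with $1\le k\le w+\ell$. I would iterate this single containment: an easy induction on $m$ shows $t_{j}^{m}I^{M+mk}\subseteq J_{j-1}^{mk}I^{M}$, the step being $t_{j}^{m+1}I^{M+(m+1)k}=t_{j}\bigl(t_{j}^{m}I^{M+mk}\bigr)I^{k}\subseteq J_{j-1}^{mk}\bigl(t_{j}I^{M+k}\bigr)\subseteq J_{j-1}^{(m+1)k}I^{M}$. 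Taking $m=q$ and using $c\in I^{M}$ then yields $c\,t_{j}^{q}I^{kq}\subseteq t_{j}^{q}I^{M+kq}\subseteq J_{j-1}^{kq}I^{M}\subseteq J_{j-1}^{kq}$, which is (1). The key point is that only the fixed exponent $n=k$ is ever used, so condition 5 is never invoked outside its allowed range even though $q$ is arbitrary; naively raising the whole containment to the $q$-th power would instead force $c\in I^{qM}$, which is impossible for a fixed nonzero $c$.

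For (2): I would induct on $i$ with $g\le i\le\ell$, proving the inclusion for all $r\in\{0,\dots,w\}$ simultaneously. The base case $i=g$ is easy: $t_{l}=0$ for $l\le g$ gives $B_{g}=J_{g}$, and $J_{g}^{(g+r)q}\subseteq(J_{g}^{r+1})^{[q]}=(J_{g}^{[q]})^{r+1}$ by the standard Frobenius pigeonhole (dividing the exponents of a degree-$(g+r)q$ monomial in the $g$ generators of $J_{g}$ by $q$, the quotients sum to at least $r+1$, because the remainders sum to at most $g(q-1)$ and $g\ge 1$). For the step $i-1\to i$, write $J_{i}^{(i+r)q}=\sum_{s}a_{i}^{s}J_{i-1}^{(i+r)q-s}$, set $s=q\sigma+\rho$ with $0\le\rho<q$, and use the characteristic: $a_{i}^{q}=(b_{i}-t_{i})^{q}=b_{i}^{q}\pm t_{i}^{q}$. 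Expanding $a_{i}^{s}=(a_{i}^{q})^{\sigma}a_{i}^{\rho}$, and re-expanding $(t_{i}^{q})^{\sigma-\tau-1}$ via $t_{i}^{q}=\pm(b_{i}^{q}-a_{i}^{q})$ whenever $\sigma-\tau\ge 2$, reduces every contribution to terms of the form $c^{i-g}b_{i}^{q\tau''}\cdot\{1\text{ or }t_{i}^{q}\}\cdot a_{i}^{\bullet}J_{i-1}^{\bullet}$ with $\tau''\le\sigma$. Terms with $\tau''\ge r+1$ lie in $(b_{i}^{q})^{r+1}\subseteq(B_{i}^{r+1})^{[q]}$ and are done. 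For $\tau''\le r$, using $(K^{n})^{[q]}=(K^{[q]})^{n}$ one has $(B_{i}^{r+1})^{[q]}=\sum_{u=0}^{r+1}b_{i}^{uq}(B_{i-1}^{r+1-u})^{[q]}$, and the inductive hypothesis (at exponent $r-\tau''\in\{0,\dots,w\}$) gives $(B_{i-1}^{r+1-\tau''})^{[q]}\supseteq c^{i-1-g}J_{i-1}^{(i-1+(r-\tau''))q}$; so after removing $b_{i}^{q\tau''}$ it suffices to land the remaining factor in $c^{i-1-g}J_{i-1}^{(i-1+r-\tau'')q}$. When there is no $t_{i}^{q}$ factor (so $\tau''=\sigma$) this follows from $\rho<q$ alone, since then $(i+r)q-s=(i+r-\tau'')q-\rho\ge(i-1+r-\tau'')q$. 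When there is a $t_{i}^{q}$ factor, one bounds $a_{i}^{\bullet}J_{i-1}^{\bullet}\subseteq I^{(i+r-\tau''-1)q}$ (a short exponent count) and applies (1) with $j=i$ and $k=i+r-\tau''-1\in[1,w+\ell]$ to get $c\,t_{i}^{q}I^{(i+r-\tau''-1)q}\subseteq J_{i-1}^{(i+r-\tau''-1)q}=J_{i-1}^{(i-1+r-\tau'')q}$, the leftover $c^{i-1-g}$ being harmless.

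The step I expect to be the main obstacle is precisely keeping the exponent of $c$ equal to $c^{i-g}$ in the induction for (2): a single term $a_{i}^{s}$ can carry $t_{i}$ to a power of order $(i+r)q$, and removing such a power by brute-force repeated use of (1) would cost a power of $c$ growing with $q$, which is fatal. What makes it work is the characteristic-$p$ identity $a_{i}^{q}=b_{i}^{q}\pm t_{i}^{q}$ together with its rearrangement $t_{i}^{q}=\pm(b_{i}^{q}-a_{i}^{q})$: these let one trade away all but one factor of $t_{i}^{q}$ for $b_{i}^{q}$'s (which merely feed the target ideal $(B_{i}^{r+1})^{[q]}$) and $a_{i}^{q}$'s (which, as $a_{i}\in I$, only build up the very $I$-power that one later use of (1) consumes), so that a single application of (1), hence one extra factor of $c$, suffices per inductive step. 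The remaining thing to verify is that the $I$-power produced by collapsing the $a_{i}$'s and $J_{i-1}$'s is exactly what (1) needs in order to reach the $J_{i-1}$-power demanded by the inductive hypothesis; this reduces to the exponent identity $q\mu+\rho+(i+r)q-q\sigma-\rho=(i+r-\tau''-1)q$, which balances on the nose.
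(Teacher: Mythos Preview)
Your argument is correct and is precisely the approach taken in \cite{AH1}; note that the present paper does not actually supply a proof of this lemma but simply refers the reader to the proof of Theorem~3.3 there. In particular, your iteration $t_j^{m}I^{M+mk}\subseteq J_{j-1}^{mk}I^{M}$ (using only the fixed value $n=k$ in condition~5) and your induction on $i$ for part~(2), with the characteristic-$p$ identity $a_i^{q}=b_i^{q}-t_i^{q}$ used to trade all but one factor of $t_i^{q}$ for $b_i^{q}$'s and $a_i^{q}$'s so that a single application of~(1) suffices per step, match the original argument exactly.
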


\begin{proof} [{\bf Proof}] This lemma combines useful facts that were presented in the proof of Theorem \ref{inclusion}. For their proofs, refer to the proof of Theorem 3.3 in \cite{AH1}.
\end{proof}

The next result is a generalization of Lemma 4.3 in \cite{AH1}.

\begin{lemma}\label{mine} Under the above assumptions, assume that $g < \ell$. Let $m$ be $0$ or $-1$. Then for all $g+1 \leq j \leq \ell$, we have $$t_j \, \overline{I^{\ell+m}} \subseteq (B_{j-1}^{\ell-j+m+2})^*.$$
\end{lemma}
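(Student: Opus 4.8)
The plan is to induct on $j$. The base case is $j = g+1$, where the claim reads $t_{g+1}\,\overline{I^{\ell+m}} \subseteq (B_g^{\ell-g+m+1})^*$; since $B_g = J_g = (a_1,\dots,a_g)$ and $t_{g+1} \in (J_g^n I^M : I^{M+n})$ for all $n \le w+\ell$ (condition 5 of Proposition~\ref{mainprop1}), one gets, for $z \in \overline{I^{\ell+m}}$, that $c z \in I^{\ell+m}$ is integrally dependent and multiplying through shows $t_{g+1} c z \in J_g^{\ell+m} I^? \subseteq \dots$; more carefully, using the element $c \in I^M \cap R^0$ from Lemma~\ref{facts} and iterating Frobenius, $c^{\,?} (t_{g+1} z)^q \in (J_g^{\ell-g+m+1})^{[q]} = (B_g^{\ell-g+m+1})^{[q]}$ for all $q$, which places $t_{g+1} z$ in the tight closure. (Here one uses that $\overline{I^{\ell+m}}$ is contained in the tight closure of $I^{\ell+m}$ and that $c$ can be taken as a single fixed test-type element simultaneously handling all $z$.)

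For the inductive step, suppose the statement holds for $j$ and prove it for $j+1$. Let $z \in \overline{I^{\ell+m}}$. The idea, following Aberbach--Huneke's Lemma~4.3, is to run a tight-closure/colon computation in which $t_{j+1}$ plays two roles: via condition~5 it multiplies powers of $I$ into powers of $J_j$, and via Lemma~\ref{facts}(1) the element $c t_{j+1}^q$ carries $I^{kq}$ into $J_j^{kq}$. Concretely, I would first use integral dependence of $z$ to get $c z \in I^{\ell+m}$ (absorbing the integral equation into one multiplication by $c$, after raising $c$ to a suitable Frobenius-stable power), then write $c^{?} (t_{j+1} z)^q$ and push it, using Lemma~\ref{facts}(1) with $k$ chosen to match the exponent, into $J_j^{(\ell+m)q}$. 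Now $J_j^{(\ell+m)q} = J_j^{(j + (\ell-j+m))q}$, so taking $i = j$ and $r = \ell - j + m$ in Lemma~\ref{facts}(2) (valid when $0 \le r \le w$, which holds for the relevant range of $m \in \{0,-1\}$ and $j \le \ell$) yields $c^{\,j-g} J_j^{(\ell+m)q} \subseteq (B_j^{\ell-j+m+1})^{[q]}$. Combining, $c^{\,?} (t_{j+1} z)^q \in (B_j^{\ell-j+m+1})^{[q]}$ for all $q$, i.e. $t_{j+1} z \in (B_j^{\ell-j+m+1})^*$.

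That is not yet the desired conclusion, which has $B_{j}$ (all of $b_1,\dots,b_j$) rather than only the subtlety of which power; but note $(B_j^{\ell-j+m+1})^* = (B_{(j+1)-1}^{\ell-(j+1)+m+2})^*$, which is exactly the target with $j$ replaced by $j+1$. So in fact the induction is essentially a direct application of Lemma~\ref{facts} parts (1) and (2) with the exponents lined up, rather than a genuine recursion on the previous case — the "inductive" structure is really just the bookkeeping that lets us replace $J_j$ by $B_j$ at the cost of the factor $c^{j-g}$, and the single element $c$ from Lemma~\ref{facts} was chosen to absorb all these powers at once.

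The main obstacle I anticipate is purely bookkeeping with exponents: one must choose $q$-independent powers of $c$ (and possibly pass from $c$ to a fixed power of it, harmless for membership in $R^0$ and for the tight-closure conclusion), verify that the index $k$ in Lemma~\ref{facts}(1) stays within $1 \le k \le w+\ell$ and that $r = \ell-j+m$ stays within $0 \le r \le w$ across the whole range $g+1 \le j+1 \le \ell$ and $m \in \{0,-1\}$ (this is where the hypothesis $g < \ell$ and the freedom in choosing $w$ large matter), and confirm that integral closure can be swallowed into a single multiplication by $c$ uniformly — all of which are the kinds of estimates already carried out in the proof of Theorem~\ref{inclusion} in \cite{AH1}, so I would lean on Lemma~\ref{facts} to package them.
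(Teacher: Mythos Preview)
Your approach is essentially the paper's: pick $d\in R^0$ witnessing $z\in\overline{I^{\ell+m}}$ via $dz^q\in I^{(\ell+m)q}$, apply Lemma~\ref{facts}(1) to carry $ct_j^q I^{(\ell+m)q}$ into $J_{j-1}^{(\ell+m)q}$, then Lemma~\ref{facts}(2) with $i=j-1$ and $r=\ell-j+m+1$ (requiring $w\ge \ell-g$) to land in $(B_{j-1}^{\ell-j+m+2})^{[q]}$, so $dc^{j-g}(t_jz)^q$ sits in the right Frobenius power and $t_jz\in(B_{j-1}^{\ell-j+m+2})^*$. The induction framing is superfluous (as you yourself observe), and the one slip to fix is that integral closure does not give $cz\in I^{\ell+m}$ but rather the Frobenius-power statement $dz^q\in I^{(\ell+m)q}$ for all $q$; once that is written correctly your argument is exactly the paper's.
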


\begin{proof}[{\bf Proof}]  Fix an integer $w \geq \ell-g  \geq 0$ and let $z \in \overline{I^{\ell+m}}$. Then there exists an element $d \in R^0$ such that $d z^q \in I^{(\ell+m)q}$, for all powers $q$ of $p$.  
Also, choose $c \in I^M \cap R^0$ satisfying the conclusions of Lemma \ref{facts}. 

For $g+1 \leq j \leq \ell$, $ d c^{j-g} t_j^q z^q \in t_j^q c^{j-g}\, I^{(\ell+m)q} = c^{j-g-1}c t_j^q \,I^{(\ell+m)q}$.
Apply  Lemma \ref{facts}(1) and (2)  to obtain $d c^{j-g} t_j^q z^q \in c^{j-1-g}\, J_{j-1}^{(\ell+m)q} \subseteq (B_{j-1}^{\ell-j+m+2})^{[q]}$ when taking $g \leq i=j-1 \leq \ell$ and $0\leq r=\ell-j+m+1 \leq \ell -g \leq w$.

Hence, $d c^{j-g} t_j^q z^q \in (B_{j-1}^{\ell-j+m+2})^{[q]}$ which implies that $t_j z \in (B_{j-1}^{\ell-j+m+2})^*$, since the element $ d c^{j-g}$ is in $R^0$. 
Therefore, we conclude that $t_j \,\overline{I^{\ell+m}} \subseteq (B_{j-1}^{\ell-j+m+2})^*$, for $m=0$ or $-1$.
\end{proof}

\begin{remark} In Theorem \ref{inclusion} and Lemmas \ref{facts} and \ref{mine}, if one replaces any $b_i=a_i+t_i$ by $a_i+t_i^2$, the conclusions remain unchanged. This is true because by raising any $t_i$ to a higher power, conditions 1 through 6 of Proposition \ref{mainprop1} still hold.
\end{remark}

The next theorem generalizes  Aberbach and Huneke's Theorem 3.6 of \cite{AH1}, stated here as Theorem~\ref{oldth}. It shows that Serre's condition, the assumption that $R/I$ is equidimensional and the generic reduction number hypothesis are superfluous. In the proof that we present, we make the appropriate modifications to Aberbach and Huneke's proof of Theorem \ref{oldth}. 

\begin{theorem} \label{newth} Let $(R,\m)$ be an F-rational Cohen-Macaulay local ring (e.g., an excellent F-rational local ring), $I \subseteq R$ an ideal of analytic spread $\ell$ and of height $g < \ell$ and let $J$ be any reduction of $I$. Then $\overline{I^\ell} \subseteq J I_{\ell-1}.$
\end{theorem}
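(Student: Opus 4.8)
The plan is to reduce to the complete local case with infinite residue field, then to reduce to proving $\overline{I^\ell}\subseteq B_{\ell-1}\,I_{\ell-1}$ for a suitable parameter ideal $B_{\ell-1}=(b_1,\dots,b_{\ell-1})$ chosen $\m$-adically close to $J$, and finally to exploit the tight-closure containments of Lemma~\ref{mine} together with the new colon relation of Proposition~\ref{mainprop2}. First I would note that one may assume the residue field is infinite (pass to $R(t)=R[t]_{\m R[t]}$, which does not change F-rationality, Cohen-Macaulayness, heights, analytic spread, or the formation of $I_{\ell-1}$ or reductions), that $J$ may be taken to be a minimal reduction (any reduction contains a minimal one, and shrinking $J$ only strengthens the conclusion), and that $R$ may be taken complete (F-rational excellent rings have F-rational completions; Cohen-Macaulayness and all the ideal-theoretic data are preserved, and $\overline{I^\ell}\subseteq JI_{\ell-1}$ descends faithfully since $\widehat R$ is faithfully flat over $R$). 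At this point $R$ is equidimensional and catenary, so Propositions~\ref{mainprop1} and~\ref{mainprop2}, Theorem~\ref{inclusion}, and Lemmas~\ref{facts} and~\ref{mine} all apply.

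Next, fix a minimal reduction $J=(a_1',\dots,a_\ell')$ generated by a basic generating set, and invoke Proposition~\ref{mainprop2} to produce generators $a_1,\dots,a_\ell$ of $J$ and elements $t_{g+1},\dots,t_\ell$ satisfying conditions 1--6 of Proposition~\ref{mainprop1} together with the extra colon inclusion
$$(I_{\ell-1},t_{g+1}^2,\dots,t_{\ell-1}^2):_R t_\ell\subseteq (I_{\ell-1},t_{g+1},\dots,t_{\ell-1}).$$
Set $b_i=a_i+t_i^2$ for $g+1\le i\le \ell-1$ and $b_\ell=a_\ell+t_\ell$, $b_i=a_i$ for $i\le g$, and $B_i=(b_1,\dots,b_i)$; by the Remark following Lemma~\ref{mine} all the conclusions above still hold with these squared $t_i$'s in positions $g+1,\dots,\ell-1$. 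Now take $z\in\overline{I^\ell}=\overline{J^\ell}$. By Theorem~\ref{inclusion} (with $w=0$), $z\in(B_\ell)^*=(B_\ell^{1})^*$, so we may write (using F-rationality, since $b_1,\dots,b_\ell$ are parameters and $\ell=\dim$ of the relevant localization or by the tight-closure $=$ ordinary closure property) $z\in B_\ell=(B_{\ell-1},b_\ell)$. Moreover Lemma~\ref{mine} with $m=0$ and $j=\ell$ gives $t_\ell z\in (B_{\ell-1}^{2})^*$; more generally, running $j$ from $\ell$ down through $g+1$ and using F-rationality of parameter ideals, I would show by descending induction that $z$ can be modified by elements of $B_{\ell-1}$ so as to land inside $(I_{\ell-1},b_1,\dots,b_g) + (\text{stuff killed by } t_\ell,\dots,t_{g+1})$.

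The heart of the argument — and the main obstacle — is the following elimination step, which is where the colon inclusion from Proposition~\ref{mainprop2} does its work. Writing $z=\beta_\ell b_\ell + z'$ with $z'\in B_{\ell-1}$, one wants to absorb $\beta_\ell b_\ell$ into $B_{\ell-1}I_{\ell-1}$; the point is that $b_\ell=a_\ell+t_\ell$ and $a_\ell\in J\subseteq I$, so the issue is the term $\beta_\ell t_\ell$. Multiplying $z$ by $t_\ell$ and using $t_\ell z\in(B_{\ell-1}^2)^*=B_{\ell-1}^2$ (F-rationality again, as $b_1,\dots,b_{\ell-1}$ are parameters), one deduces a relation forcing $\beta_\ell t_\ell^2$, hence $\beta_\ell t_\ell$ after using that $b_i\equiv t_i^2$ modulo $I$ for the intermediate indices and the colon inclusion, to lie in $(I_{\ell-1},t_{g+1},\dots,t_{\ell-1})$; combined with $B_{\ell-1}\supseteq(b_1,\dots,b_g)$ and $t_i^2\equiv b_i$ mod $I$ this pushes the offending piece into $B_{\ell-1}I_{\ell-1}+ (b_1,\dots,b_g)$. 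Iterating this elimination down the chain $j=\ell,\ell-1,\dots,g+1$ — at each stage using Lemma~\ref{mine} to get $t_j z\in(B_{j-1}^{\ell-j+2})^*$, F-rationality to remove the star, and the colon relations to clear the $t_j$'s one at a time — eventually expresses $z$ as an element of $(b_1,\dots,b_g)I_{\ell-1}+ \dots \subseteq B_\ell I_{\ell-1}$ modulo terms that visibly lie in $J I_{\ell-1}$. Finally, since the $b_i$ were chosen $\m$-adically arbitrarily close to $J$ (condition 2 with $N\gg0$), a Krull-intersection / Artin–Rees argument lets one pass from $B_\ell I_{\ell-1}\ni z$ for all such choices to $J I_{\ell-1}\ni z$: one shows $\overline{I^\ell}\subseteq JI_{\ell-1} + \m^N\overline{I^\ell}$ for every $N$, whence $\overline{I^\ell}\subseteq \bigcap_N (JI_{\ell-1}+\m^N\overline{I^\ell}) = JI_{\ell-1}$. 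I expect the bookkeeping in the descending induction — correctly tracking which colon ideal absorbs which $t_j$ and verifying the exponents $\ell-j+m+2$ line up with the hypotheses of Lemmas~\ref{facts} and~\ref{mine} — to be the delicate part, while the reductions and the final limiting argument are routine.
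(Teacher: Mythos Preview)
Your setup and the first elimination step are essentially the paper's argument: write $z=\sum r_i b_i$, multiply by $t_\ell$, use Lemma~\ref{mine} with $j=\ell$, $m=0$ to get $t_\ell z\in (B_{\ell-1}^2)^*$, then F-rationality and the regular-sequence property of $b_1,\dots,b_\ell$ to force a coefficient into $(I_{\ell-1},t_{g+1}^2,\dots,t_{\ell-1}^2)$, and finally the colon inclusion from Proposition~\ref{mainprop2} to drop to $(I_{\ell-1},t_{g+1},\dots,t_{\ell-1})\subseteq I_{\ell-1}+\m^N$. The Krull-intersection finish is also correct.

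The gap is your descending induction. You plan to iterate over $j=\ell,\ell-1,\dots,g+1$, at each stage using $t_j z\in(B_{j-1}^{\ell-j+2})^*$ and ``the colon relations to clear the $t_j$'s one at a time.'' But Proposition~\ref{mainprop2} gives you exactly \emph{one} colon relation, for $t_\ell$; there is no analogous inclusion $(I_{\ell-1},t_{g+1}^2,\dots,t_{j-1}^2):t_j\subseteq(I_{\ell-1},t_{g+1},\dots,t_{j-1})$ for $j<\ell$, because $t_j$ need not avoid the associated primes of $I_{\ell-1}$ (only $t_\ell$ was arranged to do so via the choice of $\Lambda_{\ell-1}$). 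So the second step of your induction already has no tool to absorb $t_{\ell-1}$. The paper sidesteps this entirely: it does \emph{not} descend in $j$. Instead, with $b_k=a_k+t_k^2$ for all $k$, it fixes an index $i$ and observes that $t_\ell r_i b_i\in t_\ell\bigl(\overline{I^\ell}+(b_1,\dots,\widehat{b_i},\dots,b_\ell)\bigr)\subseteq (B_{\ell-1}^2)^*+(b_1,\dots,\widehat{b_i},\dots,b_\ell)\subseteq(b_1,\dots,b_i^2,\dots,b_\ell)$, the last ideal being a parameter ideal and hence tightly closed. This yields $t_\ell r_i\in(b_1,\dots,b_\ell)\subseteq(I_{\ell-1},t_{g+1}^2,\dots,t_\ell^2)$, and then the \emph{single} colon inclusion for $t_\ell$ gives $r_i\in I_{\ell-1}+\m^N$ for \emph{every} $i$ at once. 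In short: do not change the multiplier $t_\ell$ as $i$ varies; apply the same $t_\ell$ trick to each coefficient separately, and no induction on $j$ is needed.
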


\begin{proof}[{\bf Proof}] Without loss of generality, assume that $R$ has an infinite residue field and $J= (a_1,\dots,a_\ell)$ is a minimal reduction generated by a basic generating set.

Fix an integer $N$ and choose $t_{g+1},\dots,t_\ell \in m^N$ as in Proposition \ref{mainprop2} (here we set $w=0$). Hence, $t_{g+1},\dots,t_\ell$ satisfy the conditions of Proposition \ref{mainprop1} as well as the inclusion  $$(I_{\ell-1}, t_{g+1}^2,\dots,t_{\ell-1}^2) :_R t_{\ell} \subseteq (I_{\ell-1}, t_{g+1},\dots,t_{\ell-1}).$$

Set $b_k= a_k + t_k^2$ (we set $t_k=0$ for $k \leq g$). 
By Theorem \ref{inclusion}, $\overline{I^\ell} \subseteq (a_1,\dots, a_g,b_{g+1},\dots,b_\ell).$
Given $z \in \overline{I^\ell}$, we may write $z= r_1 a_1+ \dots+r_g a_g+ r_{g+1}b_{g+1}+\dots+r_\ell b_\ell$, where $r_i \in R$, for $1\leq i \leq \ell.$
We aim to show that $r_i \in I_{\ell -1} + m^N$, for all $i=1,\dots,\ell$.



For $1 \leq i \leq \ell$, 
\begin{equation*}
\begin{split}
t_{\ell} r_i b_i & \in t_\ell\,(\overline{I^\ell}, b_1,\dots,\widehat{b_i},\dots,b_{\ell})\\
                 & \subseteq ((a_1,\dots,a_g,b_{g+1},\dots,b_{\ell-1})^2)^*+ (b_1,\dots,\widehat{b_i},\dots,b_{\ell}), \; \mbox{by Lemma \ref{mine}} \\
                & \subseteq (b_1,\dots,b_i^2,\dots,b_\ell).  
\end{split}
\end{equation*}
By combining the terms involving $b_i$, we conclude that
\begin{equation*}
\begin{split}
t_{\ell} r_i & \in (b_i)+ (b_1,\dots,\widehat{b_i},\dots,b_\ell):b_i \\
            & = (b_i)+(b_1,\dots,\widehat{b_i},\dots,b_\ell), \; \mbox{since $b_1,\dots,b_{\ell}$ is a regular sequence} \\
            & \subseteq (J, t_{g+1}^2,\dots,t_{\ell}^2) \\
            & \subseteq (I_{\ell-1}, t_{g+1}^2,\dots,t_{\ell}^2).
\end{split}
\end{equation*}

Now combine the terms containing $t_{\ell}$ to obtain that  
\begin{equation*}
\begin{split}
r_i & \in (t_{\ell})+ (I_{\ell-1},t_{g+1}^2,\dots,t_{\ell-1}^2): t_{\ell}\\ 
    & \subseteq (t_{\ell})+(I_{\ell-1},t_{g+1},\dots,t_{\ell-1}), \; \mbox{by Proposition \ref{mainprop2}.}\\
\end{split}
\end{equation*}
Hence, $r_i \in (I_{\ell-1},t_{g+1},\dots,t_\ell) \subseteq I_{\ell-1} + m^N$, for all $i=1,\dots,\ell$.

We conclude that $z \in I_{\ell-1}(a_1,\dots,a_g)+ I_{\ell-1}(b_{g+1},\dots,b_\ell)+m^N \subseteq I_{\ell-1}(a_1,\dots,a_\ell) +m^N$.

As $N$ was arbitrary, the Krull intersection theorem gives that $z \in JI_{\ell-1}$, finishing the proof of the theorem.
\end{proof}

\begin{remark} When $R/I$ is equidimensional, $I_{\ell-1} \subseteq I^{un}$, so Theorem \ref{newth} implies in this case that $\overline{I^{\ell}} \subseteq J I^{un}$. Hence Theorem~\ref{newth} is a generalization of Aberbach and Huneke's Theorem \ref{oldth}, but removes the hypotheses involving Serre's condition on $R/I^{un}$, the generic reduction number of $I$, and the assumption that $R/I$ is equidimensional.\\
\end{remark}

\section{A theorem for F-rational Gorenstein rings}

In this section, we are interested in the cases where the power $\ell$ of $I$ in the inclusion $\overline{I^\ell} \subseteq J$ (where $J$ is a reduction of $I$), can be lowered. A cancellation theorem due to Huneke \cite{H} inspired the main idea behind the proof of our next result. In particular, we extend another theorem of Aberbach and Huneke that states:

\begin{theorem} \label{goren1}\textup{(\cite{AH1}, Theorem 4.1)} Let $(R,\m)$ be an F-rational Gorenstein local ring of dimension $d$ and having positive characteristic. Suppose that $I$ is an ideal of height $g$ and analytic spread $\ell >g$, with $R/I$ Cohen-Macaulay. Then for any reduction $J$ of $I$, $\overline{I^{\ell-1}} \subseteq J$.
\end{theorem}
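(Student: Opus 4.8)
The plan is to gain one power over the bound $\overline{I^\ell}\subseteq\mathfrak A^*$ of Theorem \ref{inclusion}, exploiting the Gorenstein hypothesis through a cancellation argument of the kind used in \cite{H}. I would first make the standard reductions: replacing $R$ by $R[x]_{\m R[x]}$ (which preserves F-rationality, the Gorenstein and Cohen-Macaulay properties, heights, analytic spread and integral closures) I may assume $R$ has infinite residue field and that $J=(a_1,\dots,a_\ell)$ is a minimal reduction generated by a basic generating set, so that $\mathfrak a:=(a_1,\dots,a_g)$ is generated by a regular sequence; being Gorenstein, $R$ is its own canonical module, so local duality is at our disposal. Two facts are worth recording: since $R/I$ is Cohen-Macaulay, $I$ is unmixed of height $g\le\ell-1$, hence $I=I_{\ell-1}$ and Theorem \ref{newth} gives $z\,\overline I\subseteq\overline{I^\ell}\subseteq JI$ for every $z\in\overline{I^{\ell-1}}$; and, since a parameter ideal of a Gorenstein local ring is irreducible, F-rationality forces $R$ to be weakly F-regular, so that every ideal of $R$ is tightly closed.

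Next I would invoke the perturbation machinery. Applying Proposition \ref{mainprop1} with $w\ge\ell-g$ produces generators $a_1,\dots,a_\ell$ of $J$ and elements $t_{g+1},\dots,t_\ell\in\m^N$ satisfying all the listed properties; set $b_i=a_i+t_i$ ($t_i=0$ for $i\le g$), $J_i=(a_1,\dots,a_i)$ and $B_i=(b_1,\dots,b_i)$, so $b_1,\dots,b_\ell$ is a regular sequence. Fix $z\in\overline{I^{\ell-1}}$. Lemma \ref{mine} with $m=-1$ gives $t_jz\in(B_{j-1}^{\ell-j+1})^*$ for $g+1\le j\le\ell$, and since every ideal is tightly closed the stars may be removed; taking $j=\ell$ produces the key relation $t_\ell z\in B_{\ell-1}$.

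The heart of the proof --- and the step I expect to be the main obstacle --- is to cancel $t_\ell$ from $t_\ell z\in B_{\ell-1}$ at the cost of only an $\m^N$-error, that is, to prove $B_{\ell-1}:_R t_\ell\subseteq J+\m^N$. One cannot simply arrange $t_\ell$ to be a nonzerodivisor modulo $B_{\ell-1}$: Condition 5 of Proposition \ref{mainprop1} forces $t_\ell$ into colon ideals $J_{\ell-1}^{\,n}I^M:I^{M+n}$, each of which is contained in any minimal prime $P$ of $B_{\ell-1}$ having $J_{\ell-1}\subseteq P$ and $I\not\subseteq P$ (there it localizes to the proper power $J_{\ell-1,P}^{\,n}$), and removing this obstruction is precisely what the Gorenstein hypothesis is for. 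Here I would imitate Huneke's cancellation theorem via local duality: because $b_1,\dots,b_{\ell-1}$ is a regular sequence in the Gorenstein ring $R$, the quotient $\overline R:=R/B_{\ell-1}$ is Gorenstein of dimension $d-\ell+1$, while the image of $t_\ell$ is a parameter on $R/I$ (Proposition \ref{mainprop1}(4)) and $R/I$ is Cohen-Macaulay of dimension $d-g\ge d-\ell+1$. Dualizing the multiplication-by-$t_\ell$ sequences over $\overline R$, and using the linkage isomorphism $\omega_{R/I}\cong(\mathfrak a:I)/\mathfrak a$, the containment $z\,\overline I\subseteq JI$, and the fact that $B_{\ell-1}$ agrees with $J_{\ell-1}\subseteq J$ modulo $\m^N$, one deduces $B_{\ell-1}:_R t_\ell\subseteq J+\m^N$, so $z\in J+\m^N$. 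As $N$ was arbitrary and $\bigcap_N(J+\m^N)=J$ by the Krull intersection theorem in the local ring $R/J$, we conclude $z\in J$, i.e.\ $\overline{I^{\ell-1}}\subseteq J$. The same scheme, with Cohen-Macaulayness of $R/I$ weakened to $\operatorname{depth}(R/I)\ge d-\ell+1$ and unmixedness of $I$ replaced by $I=I_{\ell-1}$, is what should give the stronger statement announced in the introduction.
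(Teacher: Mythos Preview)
Your overall architecture is right: reduce to a minimal reduction with a basic generating set, perturb via Proposition~\ref{mainprop1}, use Lemma~\ref{mine} with $m=-1$ to obtain $t_\ell z\in B_{\ell-1}$ for $z\in\overline{I^{\ell-1}}$, and then cancel $t_\ell$ up to an $\m^N$-error using the Gorenstein hypothesis. But the cancellation step---which you yourself flag as the crux---is not carried out, and the sketch you give does not match what actually works. Trying to prove $B_{\ell-1}:_R t_\ell\subseteq J+\m^N$ directly via ``dualizing multiplication-by-$t_\ell$ over $R/B_{\ell-1}$'' and the linkage description of $\omega_{R/I}$ does not go through: $t_\ell$ is genuinely a zerodivisor on $R/B_{\ell-1}$, the canonical module of $R/I$ plays no role, and the containment $z\overline I\subseteq JI$ (your appeal to Theorem~\ref{newth}) as well as weak F-regularity are not needed at all.

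The paper's argument is different and more concrete. One first extends $b_1,\dots,b_\ell$ to a full system of parameters by choosing $x_{\ell+1},\dots,x_d$ so that $b_1,\dots,b_\ell,x_{\ell+1},\dots,x_d$ is regular in $R$ and $t_\ell,x_{\ell+1},\dots,x_d$ is regular on $R/I$ (here is where the Cohen--Macaulayness of $R/I$, or more generally $\operatorname{depth}R/I\ge d-\ell+1$, is used). Setting $\mathfrak A=B_\ell+(x_{\ell+1},\dots,x_d)$, $K=B_{\ell-1}+(x_{\ell+1},\dots,x_d)$, $D=B_{\ell-1}:t_\ell^2$, and
\[
Q=(I,t_{g+1},\dots,t_{\ell-1},x_{\ell+1},\dots,x_d)+K\!:\!D,
\]
one shows two inclusions: (i) $\mathfrak A:t_\ell^2\subseteq Q$, via an elementary colon computation using Proposition~\ref{new} (this is why one takes $b_i=a_i+t_i^2$ rather than $a_i+t_i$); and (ii) $\overline{I^{\ell-1}}\subseteq\mathfrak A:Q$, using Lemma~\ref{mine} for the $t_j$'s, Theorem~\ref{inclusion} for the factor $I$, and the tautology $\overline{I^{\ell-1}}\subseteq D$ for the factor $K\!:\!D$. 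Then local duality is invoked in its cleanest form: since $\mathfrak A$ is a \emph{full} parameter ideal in the Gorenstein ring $R$, one has $\mathfrak A:(\mathfrak A:t_\ell^2)=(\mathfrak A,t_\ell^2)$. Chaining gives $\overline{I^{\ell-1}}\subseteq(\mathfrak A,t_\ell^2)\subseteq J+\m^N$, and Krull finishes. The missing idea in your proposal is precisely this passage through a full system of parameters and the double-colon identity; without it the cancellation cannot be justified.
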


We extend Theorem \ref{goren1} of Aberbach and Huneke in the following way:

\begin{theorem} \label{goren2} Let $(R,m)$ be an F-rational Gorenstein local ring of dimension $d$ and characteristic $p > 0$. Suppose that $I$ is an ideal of height $g$ and analytic spread $\ell > g$. Assume that $I= I_{\ell-1}$ and that $R/I$ has depth at least $d-\ell+1$. Then for any reduction $J$ of $I$, we have $\overline{I^{\ell-1}} \subseteq J.$

In particular, if $\ell =d$ and $I=I_{\ell-1}$, then $\overline{I^{\ell-1}} \subseteq J$.
\end{theorem}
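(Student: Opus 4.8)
The plan is to reduce Theorem~\ref{goren2} to Theorem~\ref{goren1} by a deformation/local-duality argument in the spirit of Huneke's cancellation theorem \cite{H}, exactly as in the treatment of Theorem~\ref{newth} but now tracking the extra power of $I$ that Gorenstein-ness buys us. First I would make the usual harmless reductions: pass to a faithfully flat extension to assume the residue field is infinite, and take $J = (a_1,\dots,a_\ell)$ to be a minimal reduction generated by a basic generating set in the sense of Definition~\ref{basgen}. Fix an integer $N$, invoke Proposition~\ref{mainprop1}/\ref{mainprop2} with $w = 0$ to produce $a_1,\dots,a_\ell$ and $t_{g+1},\dots,t_\ell \in \m^N$, set $b_k = a_k + t_k^2$ (with $t_k = 0$ for $k \le g$), and write $\mathfrak{A} = (a_1,\dots,a_g,b_{g+1},\dots,b_\ell)$, a parameter ideal. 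By Lemma~\ref{mine} applied with $m = -1$, for each $g+1 \le j \le \ell$ we have $t_j\,\overline{I^{\ell-1}} \subseteq (B_{j-1}^{\ell-j+1})^*$; in particular taking $j = \ell$ gives $t_\ell\,\overline{I^{\ell-1}} \subseteq (B_{\ell-1})^* = B_{\ell-1}^*$, where $B_{\ell-1} = (b_1,\dots,b_{\ell-1})$ is generated by a regular sequence of length $\ell-1$ in the Cohen-Macaulay ring $R$, hence tightly closed by F-rationality. So $t_\ell\,\overline{I^{\ell-1}} \subseteq B_{\ell-1}$.

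The heart of the argument is a colon-capturing step that cancels $t_\ell$. Given $z \in \overline{I^{\ell-1}}$, the containment $t_\ell z \in B_{\ell-1} = (b_1,\dots,b_{\ell-1})$ says $z \in B_{\ell-1} :_R t_\ell$. Here is where the hypothesis $I = I_{\ell-1}$ together with the depth condition $\operatorname{depth} R/I \ge d-\ell+1$ must be used: I want to show $B_{\ell-1} :_R t_\ell \subseteq (B_{\ell-1}, I) + \m^N$, or more precisely that modulo $I$ the element $t_\ell$ is (up to the approximation error in $\m^N$) a nonzerodivisor on $R/(B_{\ell-1},I)$-type quotients, so that no new elements appear in the colon beyond $B_{\ell-1}$ itself plus $I$. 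Concretely, I expect to argue that $b_1,\dots,b_{\ell-1}$ together with a lift of $t_\ell$ form part of a system of parameters, that their images in $R/I$ form a regular sequence of length $\ell-g$ (this is exactly what $\operatorname{depth} R/I \ge d - \ell + 1 = d-g - (\ell-g) + 1$ and the parameter conditions 3--6 of Proposition~\ref{mainprop1} give, after choosing the $\Lambda_i$ to dodge the associated primes of $I = I_{\ell-1}$), and hence that $(B_{\ell-1}, I) :_R t_\ell = (B_{\ell-1}, I)$. Combined with the Artin--Rees/Krull-type control coming from $t_j \in \m^N$, this yields $z \in (B_{\ell-1}, I) \subseteq I + \m^N$ — wait, that is too weak; what I actually need is $z \in J + \m^N$. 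So the colon step must be run against $J$ rather than $I$: using $t_\ell z \in B_{\ell-1} \subseteq (J, t_{g+1}^2,\dots,t_{\ell-1}^2)$ and then the inclusion of Proposition~\ref{mainprop2} to absorb the $t_i^2$, iterated down from $j = \ell$ to $j = g+1$, I get $z \in (J, t_{g+1},\dots,t_\ell) \subseteq J + \m^N$.

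Since $N$ is arbitrary, the Krull intersection theorem gives $z \in J$, proving $\overline{I^{\ell-1}} \subseteq J$. For the final ``in particular'' clause: if $\ell = d$ then $d - \ell + 1 = 1$, and $\operatorname{depth} R/I \ge 1$ is automatic as soon as $I$ has no embedded prime equal to $\m$, which is guaranteed by $I = I_{\ell-1} = I_{d-1}$ (the ideal $I_{d-1}$ by construction retains only primary components of height $\le d-1$, so $\m \notin \operatorname{Ass}(R/I)$ unless $I$ is $\m$-primary, but then $\ell = d$ forces $g = d$, contradicting $g < \ell$); hence the depth hypothesis is free and the conclusion follows from the main statement. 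The step I expect to be the genuine obstacle is the cancellation of $t_\ell$: one must verify carefully that the perturbed elements $b_1,\dots,b_{\ell-1}, t_\ell$ behave like a regular sequence modulo $I$ — this is where the depth bound $d-\ell+1$ is exactly calibrated and where the choice of the prime-avoidance sets $\Lambda_i$ in Proposition~\ref{mainprop1}(6) has to be made to include all associated primes of $I$ of the relevant heights, so that the colon $(B_{\ell-1},I):t_\ell$ does not swell. The Gorenstein hypothesis enters precisely to make this cancellation legitimate with only $\ell-1$ (rather than $\ell$) parameters, via the self-duality of $R$ that underlies Huneke's argument in \cite{H}; making that duality step precise, rather than the bookkeeping, is the crux.
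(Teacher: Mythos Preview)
Your outline has a genuine gap at the cancellation step, and your own ``wait, that is too weak'' diagnoses it correctly---but the proposed repair does not work. From $t_\ell z \in B_{\ell-1} \subseteq (J, t_{g+1}^2,\dots,t_{\ell-1}^2)$ you want to invoke Proposition~\ref{mainprop2} to absorb the $t_i^2$ and land in $J+\m^N$. But Proposition~\ref{mainprop2} only gives you a colon against $I_{\ell-1}$, namely $(I_{\ell-1},t_{g+1}^2,\dots,t_{\ell-1}^2):t_\ell \subseteq (I_{\ell-1},t_{g+1},\dots,t_{\ell-1})$; it says nothing about the corresponding colon with $J$ in place of $I_{\ell-1}$. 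So after enlarging $J$ to $I=I_{\ell-1}$ you still only get $z \in (I,t_{g+1},\dots,t_{\ell-1}) \subseteq I+\m^N$, exactly the conclusion you already rejected. The assertion that ``$b_1,\dots,b_{\ell-1},t_\ell$ behave like a regular sequence modulo $I$'' is also off: the first $g$ of the $b_i$ lie in $I$, so their images in $R/I$ are zero; and the depth bound $d-\ell+1$ is not calibrated to give a regular sequence of length $\ell-g$ in $R/I$.

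What is missing is the actual local-duality mechanism, which requires an $\m$-primary parameter ideal. The paper uses the depth hypothesis $\operatorname{depth}(R/I)\ge d-\ell+1$ to choose elements $x_{\ell+1},\dots,x_d$ so that $b_1,\dots,b_\ell,x_{\ell+1},\dots,x_d$ is a full system of parameters in $R$ and $t_\ell,x_{\ell+1},\dots,x_d$ is a regular sequence in $R/I$ (this is precisely $d-\ell+1$ elements). With $\mathfrak A=B_\ell+(x_{\ell+1},\dots,x_d)$ now $\m$-primary, the Gorenstein hypothesis gives $\mathfrak A:(\mathfrak A:t_\ell^2)=(\mathfrak A,t_\ell^2)$. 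One then manufactures an ideal $Q$ with $\mathfrak A:t_\ell^2\subseteq Q$ and $\overline{I^{\ell-1}}\cdot Q\subseteq \mathfrak A$ (the latter uses Lemma~\ref{mine} together with $I\,\overline{I^{\ell-1}}\subseteq\overline{I^\ell}\subseteq\mathfrak A$ and an auxiliary colon $K:D$), whence $\overline{I^{\ell-1}}\subseteq\mathfrak A:Q\subseteq(\mathfrak A,t_\ell^2)\subseteq J+\m^N$. Your sketch never builds this $\m$-primary $\mathfrak A$, so the duality identity you allude to has no ideal to act on; without it there is no way to convert ``$z\in I+\m^N$'' into ``$z\in J+\m^N$''. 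Your treatment of the ``in particular'' clause is fine.
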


\begin{proof} [{\bf Proof}] The proof is a modification of the proof of Theorem \ref{goren1} presented in \cite{AH1}.

Without loss of generality, we may assume that $R$ has an infinite residue field and that $J$ is a minimal reduction of $I$. Fix an integer $N \geq 0$, and set $\Lambda_{\ell-1}$ to be any finite set of primes of $R$ of height $\ell-1$ such the the union contains all associated primes of $I$ of height at most $\ell-1$. Choose $a_1,\dots,a_\ell$ and $t_{g+1},\dots,t_\ell$ as in Proposition \ref{mainprop1} (here we set $w=0$). 

For $1 \leq i \leq \ell$, let $b_i=a_i+t^2_i$ (with $t_i=0$ for $i\leq g$), $J_i= (a_1,\dots,a_i)$ and $B_i = (b_1,\dots, b_i)$. 

By our choice of $\Lambda_{\ell-1}$, $t_{\ell}$ is a nonzerodivisor in $R/I_{\ell-1}= R/I$. Since depth $R/I \geq d-\ell +1$, we can pick elements $x_{\ell+1},\dots,x_d$ in $R$ such that $b_1,\dots,b_\ell,x_{\ell+1},\dots,x_d$ is a regular sequence in $R$, and such that $t_{\ell},x_{\ell+1},\dots,x_d$ is a regular sequence in $R/I$. 

By Proposition \ref{new}  we can replace $t_{g+1},\dots,t_{\ell-1}$ by higher powers of themselves so that
\begin{equation} \label{eq2}
(I, t_{g+1}^2,\dots,t_{\ell-1}^2,x_{\ell+1},\dots,x_d):t_{\ell}^2 \subseteq (I,t_{g+1},\dots,t_{\ell-1},x_{\ell+1},\dots,x_d),
\end{equation}
where to obtain this inclusion we use that $t_{\ell}^2$ is regular modulo $(I,x_{\ell+1},\dots,x_d)$.

Set $\mathfrak{A}=B_\ell +(x_{\ell+1},\dots,x_d)$, $D=B_{\ell-1}: t^2_{\ell}$ and $K=B_{\ell-1} + (x_{\ell+1},\dots,x_d)$.   Note that $K:b_\ell = K$ since the elements involved form a regular sequence in $R$.

Let $Q=(I,t_{g+1},\dots,t_{\ell-1},x_{\ell+1},\dots,x_d)+ K:D$. We claim that $\mathfrak{A} : t^2_{\ell} \subseteq Q.$ Let $t^2_{\ell}u \in \mathfrak{A}$ and write 
\begin{equation} \label{eq3}
t^2_{\ell}u=w+v b_{\ell}= w+ v a_{\ell}+ v t^2_{\ell},
\end{equation}
where $w \in K$. Then $t_{\ell}^2(u-v) \in B_{\ell-1}+(a_{\ell},x_{\ell+1},\dots,x_d),$ and hence
\begin{equation*}
\begin{split}
u-v  \in (B_{\ell-1}+(a_{\ell},x_{\ell+1},\dots,x_d)):t^2_{\ell}
    & \subseteq (I,t_{g+1}^2,\dots,t_{\ell-1}^2,x_{\ell+1},\dots,x_d):t^2_{\ell} \\
    & \subseteq (I,t_{g+1},\dots,t_{\ell-1},x_{\ell+1},\dots,x_d), \mbox{by (\ref{eq2})}.
\end{split}
\end{equation*}

Thus, $u-v \in Q.$ To show that $u \in Q$, it suffices to show that $v \in K: D \subseteq Q$. Let $d \in D$ and consider $dv$. By (\ref{eq3}), $dt^2_{\ell}u=dw+dvb_{\ell}$. But as $dt^2_{\ell} \in B_{\ell-1}$, $dvb_{\ell} \in K$. Therefore, $dv \in K:b_\ell = K$, as noted above.
Consequently, $dv \in K$ and $v \in K:D \subseteq Q$. Hence, $u \in Q$ and this proves the claim  that $\mathfrak{A}:t^2_{\ell} \subseteq Q$. In particular it proves that $\mathfrak{A}: Q \subseteq \mathfrak{A}: (\mathfrak{A}:t^2_{\ell}).$

Next, we show that $\overline{I^{\ell-1}} \subseteq \mathfrak{A}:Q$. 
First, recall that $t^2_{\ell}\,\overline{I^{\ell-1}} \subseteq B_{\ell-1}$, by Lemma \ref{mine}. Thus, $\overline{I^{\ell-1}} \subseteq D$, and hence $\overline{I^{\ell-1}}(K:D) \subseteq D(K:D) \subseteq K \subseteq \mathfrak{A}$.
Moreover, $I \,\overline{I^{\ell-1}} \subseteq \overline{I^{\ell}} \subseteq \mathfrak{A}^* = \mathfrak{A}$, by Theorem \ref{inclusion} and the fact that $R$ is F-rational.

For $g+1 \leq j \leq \ell-1$, Lemma \ref{mine} implies that 
\begin{equation*}
t_j\, \overline{I^{\ell-1}}  \subseteq ((B_{j-1}^2)^{\ell-j+1})^* 
                              \subseteq B_{j-1}^* 
                              = B_{j-1} 
                              \subseteq \mathfrak{A}.
\end{equation*}
Consequently, $(t_{g+1},\dots,t_{\ell-1})\,\overline{I^{\ell-1}} \subseteq \mathfrak{A}.$
Therefore, we have proved that $\overline{I^{\ell-1}} \subseteq \mathfrak{A}:Q.$

Finally, $\overline{I^{\ell-1}} \subseteq \mathfrak{A}:Q \subseteq \mathfrak{A}:(\mathfrak{A}:t^2_{\ell}) = (\mathfrak{A}, t^2_{\ell})$, by local duality. Hence, $$\overline{I^{\ell-1}} \subseteq (J,t_{g+1}^2,\dots,t_{\ell-1}^2,t^2_{\ell},x_{\ell+1},\dots,x_d) \subseteq J+m^N.$$
An application of the Krull intersection theorem proves that $\overline{I^{\ell-1}} \subseteq J$.
\end{proof} 

\begin{remark} 
If $I$ is unmixed and equidimensional (e.g., $R/I$ is CM) in an F-rational Gorenstein ring then
 $I=I_{\ell-1}$. Therefore, Theorem~\ref{goren2} is a generalization of Theorem~\ref{goren1}.
 \end{remark}

\begin{question}  Suppose that $\ell -g \ge 2$.  Is it possible to improve on Theorem~\ref{goren2} to obtain that $\overline{I^{\ell-2}} \subseteq J$?  If, in the proof of Theorem~\ref{goren2}, we could show that, in fact, $\overline{I^{\ell-1}} \subseteq B_\ell + (x_{\ell+1},\dots, x_d)$, then one could extend the result.\\
\end{question}

\bigskip


\begin{thebibliography}{99}
\bibitem{AH1} I. M. Aberbach and C. Huneke, \emph{F-rational rings and the integral closures of ideals,} Michigan Math. J., \textbf{49} (2001), 3-11.
\bibitem{AH2} I. M. Aberbach and C. Huneke, \emph{F-rational rings and the integral closures of ideals II,} Ideal Theoretic Methods in Commutative Algebra, Lecture notes in pure and applied mathematics, \textbf{220} (2001), Marcel Dekker, 1-12. 
\bibitem{AH3} I. M. Aberbach and C. Huneke, \emph{A theorem of Brian\c{c}on-Skoda type for regular local rings containing a field,} Proc. Amer. Math. Soc., \textbf{124} (1996), 707-713.
\bibitem{AHT} I. M. Aberbach, C. Huneke and N. V. Trung, \emph{Reduction numbers, Brian\c{c}on-Skoda theorems and the depth of Rees rings,} Compositio Math., \textbf{97} (1995), 403-434.
\bibitem{BS} J. Brian\c{c}on and H. Skoda, \emph{Sur la cl$\hat{o}$ture int$\acute{e}$grale d'un 
id$\acute{e}$al de germes de fonctions holomorphes en un point de $\mathbb{C}^n$,} C. R. Acad. Sci. Paris S$\acute{e}$r. A, \textbf{278} (1974), 949-951.
\bibitem{HH} M. Hochster and C. Huneke, \emph{Tight closure, invariant theory, and the Brian\c{c}on-Skoda theorem,} J. Amer. Math. Soc., \textbf{3} (1990), 31-116.
\bibitem{H} C. Huneke, \emph{A cancellation theorem for ideals,} J. Pure Appl. Alg., \textbf{152} (2000), 123-132.
\bibitem{L} J. Lipman, \emph{Adjoints of ideals in regular local rings,} Math. Res. Lett., \textbf{1} (1994), 739-755.
\bibitem{LS} J. Lipman and A. Sathaye, \emph{Jacobian ideals and a theorem of Brian\c{c}on-Skoda,} Michigan Math. J., \textbf{28} (1981), 199-222.
\bibitem{LT} J. Lipman and B. Teissier, \emph{Pseudo-rational local rings and a theorem of Brian\c{c}on-Skoda about integral closures of ideals,} Michigan Math. J., \textbf{28} (1981), 97-116.
\bibitem{NR} S. Northcott and D. Rees, \emph{Reductions of ideals in local ring,} Math. Proc. Cambridge Phil. Soc., \textbf{50} (1954), 145-158.
\bibitem{Sw} I. Swanson, \emph{Joint reductions, tight closure, and the Brian\c{c}on-Skoda theorem,} J. of Algebra, \textbf{147} (1992), 128-136.

\end{thebibliography}
\end{document}